\documentclass[12pt]{amsart}
\usepackage[margin=1in]{geometry}

\usepackage{aliascnt}
\usepackage{amssymb}
\usepackage{amsmath}
\usepackage{enumitem}
\usepackage{xcolor}
\definecolor{darkblue}{rgb}{0.0, 0.0, 0.8}
\usepackage[colorlinks=true, allcolors=darkblue]{hyperref}
\usepackage{tikz}
\usepackage{tikz-3dplot}
\usepackage{tikz-cd}
\usepackage{pgfplots}
\usepackage{mathtools}
\usepackage{todonotes}
\usepackage{tabularx}
\usepackage{mathtools}
\usepackage{aliascnt}
\usepackage[colorlinks=true, allcolors=darkblue]{hyperref}
\usepackage{comment}
\newtheorem{theorem}{Theorem}[section]

\newaliascnt{lemma}{theorem}
\newtheorem{lemma}[lemma]{Lemma}
\aliascntresetthe{lemma}

\newaliascnt{proposition}{theorem}
\newtheorem{proposition}[proposition]{Proposition}
\aliascntresetthe{proposition}

\newaliascnt{corollary}{theorem}
\newtheorem{corollary}[corollary]{Corollary}
\aliascntresetthe{corollary}

\newaliascnt{conjecture}{theorem}

\aliascntresetthe{conjecture}

\newaliascnt{assumption}{theorem}

\aliascntresetthe{assumption}

\newaliascnt{definition}{theorem}
\newtheorem{definition}[definition]{Definition}
\aliascntresetthe{definition}

\newaliascnt{question}{theorem}

\aliascntresetthe{question}

\newaliascnt{remark}{theorem}
\newtheorem{remark}[remark]{Remark}
\aliascntresetthe{remark}

\newaliascnt{example}{theorem}

\aliascntresetthe{example}

\newtheorem*{notation*}{Notation}
\newtheorem*{theorem*}{Theorem}
\newtheorem*{conjecture*}{Conjecture}

\numberwithin{figure}{section}
\numberwithin{table}{section}
\numberwithin{equation}{section}

\allowdisplaybreaks
\def\S{\mathbb{S}}

\def\NN{\mathbb{N}}

\def\RR{\mathbb{R}}
\def\CC{\mathbb{C}}
\def\Slr{\mathbb{S}\left(\ell_{\RR}^2 \right)}

\def\U{\mathcal{U}}
\def\Glt{\mathcal{G}^{\ell^2}}
\def\Gfr{\mathcal{G}^{\mathrm{FR}}}
\def\i{\mathrm{i}}

\def\d{\mathrm{d}}

\newcommand{\R}{\mathbb{R}}

\newcommand\restr[2]{\ensuremath{\left.#1\right|_{#2}}}

\usepackage[capitalise,nameinlink]{cleveref}
\crefname{theorem}{theorem}{theorems}
\Crefname{theorem}{Theorem}{Theorems}
\crefname{lemma}{lemma}{lemmas}
\Crefname{lemma}{Lemma}{Lemmas}
\crefname{proposition}{proposition}{propositions}
\Crefname{proposition}{Proposition}{Propositions}
\crefname{corollary}{corollary}{corollaries}
\Crefname{corollary}{Corollary}{Corollaries}
\crefname{conjecture}{conjecture}{conjectures}
\Crefname{conjecture}{Conjecture}{Conjectures}
\crefname{assumption}{assumption}{assumptions}
\Crefname{assumption}{Assumption}{Assumptions}
\crefname{definition}{definition}{definitions}
\Crefname{definition}{Definition}{Definitions}
\crefname{question}{question}{questions}
\Crefname{question}{Question}{Questions}
\crefname{remark}{remark}{remarks}
\Crefname{remark}{Remark}{Remarks}
\crefname{example}{example}{examples}
\Crefname{example}{Example}{Examples}
\begin{document}

\title[Information Geometry via the $Q$-Root Transform]{Information Geometry via the $Q$-Root Transform}

\author{Levin Maier}
\address{Faculty of Mathematics and Computer Science,
	University of Heidelberg}
\email{lmaier@mathi.uni-heidelberg.de}

\begin{abstract}
In this paper, we introduce \emph{$\ell^p$-information geometry}, an infinite-dimensional framework that shares key features with the geometry of the space of probability densities \( \mathrm{Dens}(M) \) on a closed manifold, while also incorporating aspects of measure-valued information geometry. We define the \emph{$\ell^2$-probability simplex} with a noncanonical differentiable structure induced via the \emph{$q$-root transform} from an open subset of the \( \ell^q \)-sphere. This choice makes the \(q\)-root transform an \emph{isometry} and allows us to construct the \(\ell^2\)- and \(\ell^q\)-Fisher--Rao geometries, including \emph{Amari--\v{C}encov \(\alpha\)-connections} and a \emph{Chern connection} in the \(\ell^q\)-setting.

We then apply this framework to an infinite-dimensional linear optimization problem. We show that the corresponding gradient flow with respect to the \(\ell^2\)--Fisher--Rao metric can be solved explicitly, converges to a maximizer under a natural monotonicity assumption, and admits an interpretation as the geodesic flow of an \emph{exponential connection}. In particular, we prove that this \(e\)-connection is \emph{geodesically complete}. We further relate these flows to a \emph{completely integrable Hamiltonian system} through a \emph{momentum map} associated with a Hamiltonian torus action on infinite-dimensional complex projective space.

Finally, inspired by the \(\ell^2\)-theory, we outline an analogous Fisher--Rao geometry for \( \mathrm{Dens}(M) \) on possibly noncompact Riemannian manifolds, showing that, with a suitable spherical differentiable structure, the square-root transform remains an \emph{isometry}.
\end{abstract}

\maketitle
\setcounter{tocdepth}{1}
\tableofcontents

\section{Introduction}\label{sec:introduction}
Infinite-dimensional information geometry has by now developed into an active area of research. In particular, substantial work has been carried out on the geometry of the space of smooth probability densities \(\mathrm{Dens}(M)\) on a closed Riemannian manifold equipped with the Fisher--Rao metric; see \cite{FisherRaoisunique,bauerLpFisherRaoMetricAmari2024,khesin_GAFA,khesin_geometry_2019, khesin2024informationgeometrydiffeomorphismgroups} and the references therein. Another natural infinite-dimensional generalization of finite-dimensional information geometry, as studied in \cite{MR1800071,InfGeo_book} and the references therein, is the Fisher--Rao geometry of probability simplices with infinitely many coordinate entries.

A canonical model for such a geometry is the probability simplex in \(\ell^2_{\mathbb{R}}\), called the \(\ell^2\)-\emph{probability simplex} introduced in \Cref{s: 2}. A first basic observation of the present article is that, when this simplex is equipped with its canonical differentiable structure inherited from the ambient space, the Fisher--Rao metric fails to be well defined. To overcome this difficulty, we pull back, via the \(q\)-root transform \eqref{e: def q-root transform}, the differentiable structure of an open subset of the \(\ell^q\)-sphere to the probability simplex in \(\ell^2\). This yields a differentiable structure that is strong enough to make the Fisher--Rao geometry well behaved, and in particular allows us to define the \(\ell^2\)-Fisher--Rao metric \eqref{e:defi_l2_Fisher_Rao} as well as the \(\ell^q\)-Fisher--Rao metric \eqref{e: def q-Fisher-Rao metric} on the \(\ell^2\)-probability simplex. The latter may be viewed as the \(\ell^q\)-analogue of the \(L^q\)-Fisher--Rao metric introduced in \cite{bauerLpFisherRaoMetricAmari2024}.

Our \textbf{first main contribution} is thus the development of \(\ell^2\)- and \(\ell^q\)-information geometry on the infinite-dimensional probability simplex. More precisely, we show that the square-root transform and, more generally, the \(q\)-root transform identify these geometries isometrically with open subsets of the corresponding \(\ell^q\)-spheres; see \Cref{t: square root map is isometry} for the \(\ell^2\)-case and \Cref{T: q-root transform as isom} for the \(\ell^q\)-case. These results may be regarded as \(\ell^2\)- and \(\ell^q\)-analogues of \cite[Thm.~3.1]{khesin_GAFA} and \cite[Thm.~4.10]{bauerLpFisherRaoMetricAmari2024}. In the \(\ell^q\)-setting, we also discuss the corresponding Amari--\v{C}encov \(\alpha\)-connection and the existence of a Chern connection.

Our \textbf{second main contribution} is an application of this \(\ell^2\)-information geometry to infinite-dimensional optimization and Hamiltonian dynamics. In \Cref{s: 3}, we study an infinite-dimensional linear programming problem on the \(\ell^2\)-probability simplex and solve it via the gradient flow of the \(\ell^2\)-Fisher--Rao metric. We then show that these gradient flow lines are precisely geodesics of an exponential connection and that the resulting geodesic flow is complete. In \Cref{S: 4}, we further show that this gradient flow admits a Hamiltonian interpretation on an infinite-dimensional Kähler manifold and gives rise to infinitely many Poisson-commuting first integrals.\\

The present article is an invited extended version of \cite{Maier_GSI_2026}. Relative to the conference paper, it develops the \(\ell^q\)-information-geometric perspective further, adds the \(e\)-geodesic flow together with a Hamiltonian interpretation in the infinite-dimensional setting, and outlines a possible extension to non-compact manifolds. The Hamiltonian viewpoint pursued here is motivated by the information-geometric approach to linear programming introduced in \cite{faybusovich_hamiltonian_1991}. At the same time, the construction suggests that the \(\ell^2\)-probability simplex, equipped with the pullback differentiable structure introduced here, may serve as a useful model for the Fisher--Rao geometry of spaces of probability densities on non-compact Riemannian manifolds.\\

\noindent\textbf{Organization of the article.}
In \Cref{s: 2} we develop the \(\ell^2\)- and \(\ell^q\)-information-geometric frameworks on the infinite-dimensional probability simplex, prove that the square-root and \(q\)-root transforms are isometries, and in the \(\ell^q\)-case discuss the corresponding Amari--\v{C}encov \(\alpha\)-connection and a Chern connection. In \Cref{s: 3} we study an infinite-dimensional linear optimization problem on the \(\ell^2\)-probability simplex, derive the associated Fisher--Rao gradient flow, and reinterpret it as the geodesic flow of an exponential connection, proving geodesic completeness. In \Cref{S: 4} we show that these dynamics admit a Hamiltonian interpretation on infinite-dimensional complex projective space and are accompanied by infinitely many Poisson-commuting first integrals. Finally, in \Cref{S:5} we outline a corresponding Fisher--Rao geometry on \(\mathrm{Dens}(M)\) for possibly non-compact Riemannian manifolds.
\section{Introducing $\ell^2$ and $\ell^q$-Information Geometry}\label{s: 2}
We begin with an overview of this section. In \Cref{ss:ell_2_info_geo}, we introduce \(\ell^2\)-information geometry: we equip the probability simplex with a non-canonical differentiable structure that allows us to define the \(\ell^2\)-analogue of the Fisher--Rao metric in this framework, and, as a proof of concept, we show that the probability simplex equipped with this metric is isometric to an open subset of the \(\ell^2\)-sphere.\\
Next, in \Cref{ss:l_q_information_geometry}, we introduce the \(\ell^q\)-analogue of \Cref{ss:ell_2_info_geo}. In addition to the results obtained in the \(\ell^2\)-framework, we construct the Amari--\v{C}encov \(\alpha\)-connection and a Chern connection for the resulting \(\ell^q\)-Fisher--Rao metric, which is a strong Finsler metric.
\subsection{$\ell^2$-Information Geometry.}\label{ss:ell_2_info_geo} We begin by introducing the setting in detail. We denote by $
\ell^2_\mathbb{R}$ the space of real-valued sequences $(x_n)_{n\in \mathbb{N}} \subseteq \mathbb{R}$ satisfying $$\sum_{n=0}^{\infty} x_n^2 < \infty.$$
For convenience, we will later denote such sequences simply by $(x_n)$. This space is a real Hilbert space if equipped  with the inner product \[
\langle (x_n), (y_n)\rangle_{\ell^2} = \sum_{n=0}^{\infty} x_n y_n.\]
The unit sphere in $(\ell^2_{\mathbb{R}},\langle \cdot,\cdot \rangle_\RR$ is 
\[
\mathbb{S}\left(\ell^2_{\mathbb{R}} \right)=\left\{(x_n)\in \ell_{\mathbb{R}}^2 : \sum_{n=0}^{\infty} x_n^2 = 1 \right\},
\]
with the tangent space at the point $(x_n) \in \mathbb{S}(\ell^2_{\mathbb{R}})$ given by
\[
T_{(x_n)} \mathbb{S}(\ell^2_{\mathbb{R}}) = \left\{(v_n) \in \ell_{\mathbb{R}}^2 : \langle (x_n), (v_n) \rangle_{\ell^2_{\mathbb{R}}} = 0 \right\}.
\]
In this setting, the round metric $\Glt$, respectively the $\ell^2$-metric, is precisely the restriction of $\langle\cdot, \cdot \rangle_{\ell^2}$ onto $\mathbb{S}(\ell^2_{\mathbb{R}})$. It is well known that \( (\mathbb{S}(\ell^2_{\mathbb{R}}), \Glt) \) is a strong Riemannian Hilbert manifold in the sense of \cite{La99}, that is \[\Glt:T(\mathbb{S}(\ell^2_{\mathbb{R}})\longrightarrow T^*(\mathbb{S}(\ell^2_{\mathbb{R}})\] is a bundle isomorphism.
The open subset of strictly positive sequences is denoted by 
\begin{equation}\label{e: definition Ul2 }
	\U:=\left\{(x_n)\in \Slr:\  x_n>0 \quad \forall n\in \NN\right\},
\end{equation}
is as an open subset of the strong Hilbert manifold $(\mathbb{S}(\ell^2_{\mathbb{R}}), \Glt)$ equipped with the round metric $\Glt$ also a strong Riemannian Hilbert manifold.\\ 
We move on by introducing the $\ell^2_{\mathbb{R}}$-analogue of the space of probability densities, the $\ell^2_{\mathbb{R}}$-\emph{probability simplex}, defined by 
\begin{equation}\label{e: definition l2probability simplex}
	\Delta := \left\{(p_n) \in \ell^1_{\mathbb{R}} : \sum_{n=0}^{\infty} p_n = 1 \quad \text{and} \quad p_n > 0 \quad \forall n \in \mathbb{N} \right\}.
\end{equation}
Note, that at this point, we have several choices for how to equip $\Delta$ with a differentiable structure, see \Cref{r: the right diff strcuture}. We choose the differentiable structure on $\Delta$ so that the following homeomorphism, called the \emph{square-root map}, becomes a diffeomorphism:
\[
	\varPhi: \Delta \longrightarrow \mathcal{U}, \quad (p_n) \mapsto (\sqrt{p_n}).
\]
This structure is completely different from the differentiable structure induced by the ambient space $\ell^2_{\mathbb{R}}$. The tangent space to $\Delta$ at a point $(p_n) \in \Delta$, with respect to this differentiable structure, is given by
\begin{equation}
\label{d:differentiable_structure}
    T_{(p_n)}\Delta = \left\{ (v_n) \in \ell^1_{\mathbb{R}} : \sum_{n=0}^{\infty} v_n = 0 \quad \text{and} \quad \left(\frac{v_n}{\sqrt{p_n}}\right) \in \ell^2_{\mathbb{R}} \right\}.
\end{equation}
By introducing the $\ell^2$-\emph{Fisher--Rao} metric as:
\begin{equation}\label{e:defi_l2_Fisher_Rao}
	\mathcal{G}^{\mathrm{FR}}_{(p_n)}\left((v_n), (w_n) \right) := \frac{1}{4}\sum_{n=0}^{\infty} \frac{v_n \cdot w_n}{p_n}, \quad \forall (v_n), (w_n) \in T_{(p_n)}\Delta,
\end{equation}
the space $(\Delta, \mathcal{G}^{\mathrm{FR}})$ becomes a strong Riemannian Hilbert manifold. Before we move on, we note the following:

\begin{remark}\label{r: the right diff strcuture}
A condition similar to the one on \( \left(v_n\,/\sqrt{p_n}\,\right) \) in \eqref{d:differentiable_structure} also appears in \cite[§2]{Friedrich1991}. Without this condition, the \( \ell^2 \)-Fisher–Rao metric \eqref{e:defi_l2_Fisher_Rao} is not well-defined.\\
Even under decay assumptions analogous to those in \cite[§3.5]{MichorMumfordZoo}, one can construct rapidly decaying sequences in \( \Delta \) such that the \( \ell^2 \)-Fisher–Rao metric fails to be finite on some tangent vectors. 
\end{remark}
Subject to our chosen differentiable structure in \eqref{d:differentiable_structure}, and by a computation similar to \cite[Thm. 3.1]{khesin_GAFA}, it follows directly that:

\begin{theorem}\label{t: square root map is isometry}
The square-root map \( \varPhi \) defined by
\[
\varPhi: (\Delta, \mathcal{G}^{\mathrm{FR}}) \longrightarrow (\mathcal{U}, \mathcal{G}^{\mathrm{L}^2}), \quad (p_n) \mapsto (\sqrt{p_n}\,),
\]
is an isometry.
\end{theorem}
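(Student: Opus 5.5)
Since the differentiable structure on \(\Delta\) is defined so that \(\varPhi\) is a diffeomorphism onto \(\mathcal{U}\), only the pullback identity \(\varPhi^*\Glt = \Gfr\) has to be checked. The plan is to identify the tangent map of \(\varPhi\) explicitly, confirm that it is a linear isomorphism \(T_{(p_n)}\Delta \to T_{\varPhi(p_n)}\mathcal{U}\), and then compare the two inner products coordinatewise, as in \cite[Thm.~3.1]{khesin_GAFA}.

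\textbf{Step 1 (tangent map).} For a smooth curve \(t\mapsto (p_n(t))\) in \(\Delta\) with \(p_n(0)=p_n\) and \(\dot p_n(0)=v_n\), componentwise differentiation suggests
\[
d\varPhi_{(p_n)}(v_n) = \left(\frac{v_n}{2\sqrt{p_n}}\right).
\]
Because the structure is pulled back from \(\mathcal{U}\), the cleaner route is to work through the inverse \(\varPhi^{-1}(x_n) = (x_n^2)\). Squaring is the restriction of a smooth map \(\ell^2_{\RR}\to \ell^1_{\RR}\), since \((x_n)\mapsto(x_n y_n)\) is a continuous bilinear map \(\ell^2\times\ell^2\to\ell^1\). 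Its differential at \((x_n)\) is \((u_n)\mapsto(2x_nu_n)\).

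\textbf{Step 2 (the two tangent spaces match).} Take \((u_n)\in T_{(x_n)}\Slr\), so \(\sum x_nu_n=0\), and set \(v_n=2x_nu_n\). Then \((v_n)\in\ell^1_{\RR}\) and \(\sum v_n=0\). Also \(v_n/\sqrt{p_n}=2u_n\in\ell^2_{\RR}\), since \(\sqrt{p_n}=x_n\). Conversely, any \((v_n)\) in \eqref{d:differentiable_structure} gives \(u_n=v_n/(2\sqrt{p_n})\in\ell^2_{\RR}\), and \(\sum x_nu_n=\tfrac12\sum v_n=0\). So \(d\varPhi^{-1}\) is a bijection onto exactly the space \eqref{d:differentiable_structure}, and its inverse is the formula from Step 1. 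This step is where the weighted condition \(\left(v_n/\sqrt{p_n}\right)\in\ell^2_{\RR}\) is essential, so it is the only genuine point. I will also note that interchanging the sum with the derivative is justified by the \(\ell^1\) convergence.

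\textbf{Step 3 (metric identity).} For \((v_n),(w_n)\in T_{(p_n)}\Delta\),
\[
\Glt\bigl(d\varPhi(v_n),d\varPhi(w_n)\bigr)=\sum_{n=0}^{\infty}\frac{v_n}{2\sqrt{p_n}}\cdot\frac{w_n}{2\sqrt{p_n}}=\frac14\sum_{n=0}^{\infty}\frac{v_nw_n}{p_n}=\Gfr_{(p_n)}\bigl((v_n),(w_n)\bigr).
\]
The series converges absolutely by Cauchy--Schwarz in \(\ell^2_{\RR}\). Hence \(\varPhi\) is an isometric diffeomorphism. As a by-product, this also shows that \(\Gfr\) is a strong Riemannian metric, since \(\Glt\) restricted to the open set \(\mathcal{U}\) is one.
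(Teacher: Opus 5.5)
Your proposal is correct and takes the same route as the paper. The paper gives no proof beyond pointing to the computation of \cite[Thm.~3.1]{khesin_GAFA}: the differential $d\varPhi_{(p_n)}(v_n)=\bigl(v_n/(2\sqrt{p_n})\bigr)$ pulls the round metric back to $\frac14\sum_n v_nw_n/p_n$. Your Steps 1--3 carry out exactly this computation, and Step 2 adds a check the paper leaves implicit, namely that $d\varPhi^{-1}$ maps $T_{(x_n)}\Slr$ bijectively onto the space in \eqref{d:differentiable_structure}.
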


\begin{remark}
This result can be seen as a blend of \cite[Thm.~3.1]{khesin_GAFA} and the methods in \cite{Friedrich1991}, but does not follow immediately from them. Accordingly, \Cref{t: square root map is isometry} provides yet another infinite-dimensional analogue of \cite[Proposition~2.1]{InfGeo_book}, and it relies crucially on the differentiable structure chosen in \eqref{d:differentiable_structure}, as emphasized in \Cref{r: the right diff strcuture}.\\
Interestingly, finite-dimensional probability simplices $\Delta^N $ cannot be embedded as totally geodesic submanifolds into the infinite-dimensional simplex \( \Delta \). Indeed, \( \Delta^N \subseteq \partial \Delta \), which illustrates how \( \ell^2_{\mathbb{R}} \)-information geometry differs fundamentally from its finite-dimensional counterpart.\\
Moreover, an element of \( \Delta \) can be interpreted as a discretization of a probability measure subject to infinitely many measurements.
Note that \Cref{t: square root map is isometry} is effectively illustrated by the \Cref{fig:sqrt-map}.
\end{remark}
\begin{figure}[ht]
	\centering
\includegraphics{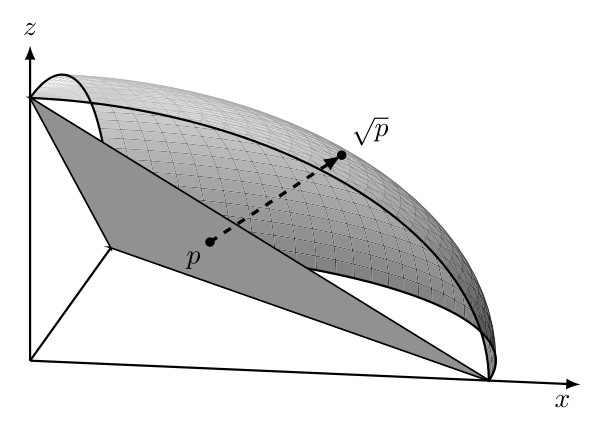}
	\caption{Illustration of the square-root map as an isometry.}
	\label{fig:sqrt-map}
\end{figure}
A natural question that arises for readers familiar with infinite-dimensional information geometry is the following:  
What is the analogue of \cite{FisherRaoisunique} in the setting of $\ell^2$-information geometry?

As a first illustration of \Cref{t: square root map is isometry}, we observe that $(\Delta, \Gfr)$ is geodesically convex; that is, for every pair of points, there exists a length-minimizing geodesic connecting them. This follows directly from the well-known fact that $(\Slr, \Glt)$ is geodesically convex, and consequently, $(\U, \Glt)$ is also geodesically convex.  
Therefore, by \Cref{t: square root map is isometry}, we obtain:
\begin{corollary}
	The $\ell^2$-probability simplex $\Delta$, equipped with the $\ell^2$-Fisher–Rao metric $\Gfr$, is geodesically convex. 
\end{corollary}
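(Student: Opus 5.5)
The plan is to prove geodesic convexity directly for $(\U,\Glt)$ and then transport it to $\Delta$ using \Cref{t: square root map is isometry}. I will not rely only on the convexity of the whole sphere, because a minimizing great-circle arc between two points of $\U$ must also be shown to stay inside $\U$.

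\textbf{Step 1 (explicit geodesic in $\U$).} Take $x=(x_n)$ and $y=(y_n)$ in $\U$ with $x\neq y$. Since all coordinates are strictly positive, $\langle x,y\rangle_{\ell^2}>0$. Hence $\theta:=\arccos\langle x,y\rangle_{\ell^2}\in(0,\pi/2)$, and in particular $x$ and $y$ are not antipodal. Define
\[
\gamma(t)=\frac{\sin((1-t)\theta)\,x+\sin(t\theta)\,y}{\sin\theta},\qquad t\in[0,1].
\]
This is the great-circle arc in the $2$-plane spanned by $x$ and $y$. It lies on $\Slr$, solves the sphere's geodesic equation $\ddot\gamma=-\theta^2\gamma$, and has length $\theta$. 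For $t\in[0,1]$ both coefficients are nonnegative and they do not vanish simultaneously. Since $x_n,y_n>0$, every coordinate $\gamma_n(t)$ is therefore strictly positive, so $\gamma([0,1])\subseteq\U$. Because $\U$ is open in $\Slr$ with the induced metric, $\gamma$ is a geodesic of $(\U,\Glt)$.

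\textbf{Step 2 (minimality), the main obstacle.} In infinite dimensions I will not appeal to compactness. Instead I will show directly that every piecewise $C^1$ curve $c$ in $\Slr$ from $x$ to $y$ has length at least $\theta$. Set $f(t)=\arccos\langle x,c(t)\rangle$. Where $\langle x,c\rangle\neq\pm1$, differentiation gives
\[
|f'(t)|=\frac{|\langle x,\dot c\rangle|}{\sqrt{1-\langle x,c\rangle^2}}\le\|\dot c\|.
\]
This holds because $\dot c\perp c$, and the component of $x$ orthogonal to $c$ has norm $\sqrt{1-\langle x,c\rangle^2}$. A standard argument handles the isolated bad points, or alternatively one can use that $f$ is $1$-Lipschitz along arclength. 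Integrating yields $L(c)\ge f(1)-f(0)=\theta$. Since curves in $\U$ are in particular curves in $\Slr$, the distance in $\U$ is at least $\theta$, and $\gamma$ attains this value. So $\gamma$ is length minimizing in $\U$.

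\textbf{Step 3 (transport to $\Delta$).} By \Cref{t: square root map is isometry}, $\varPhi$ is a diffeomorphism with $\varPhi^*\Glt=\Gfr$. It therefore maps geodesics to geodesics and preserves lengths of curves, hence also the induced distances. Given $p,q\in\Delta$, the curve $\varPhi^{-1}\circ\gamma$, where $\gamma$ joins $\varPhi(p)$ to $\varPhi(q)$, is a $\Gfr$-geodesic in $\Delta$. Explicitly its coordinates are $\gamma_n(t)^2$. It has length equal to the $\Gfr$-distance between $p$ and $q$, which proves the corollary.
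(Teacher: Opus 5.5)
Your proof is correct and follows the paper's route: first establish geodesic convexity of $(\U,\Glt)$, then transport it through the isometry $\varPhi$ of \Cref{t: square root map is isometry}. The only difference is in detail. The paper passes from convexity of $\Slr$ to convexity of $\U$ with a bare ``consequently''. You supply what that step actually requires: $\langle x,y\rangle_{\ell^2}>0$ and the nonnegative coefficients $\sin((1-t)\theta)$, $\sin(t\theta)$ keep the minimizing arc inside $\U$, and you give a compactness-free minimality proof. One correction there: drop the word ``isolated'', since the set $\{t: c(t)=\pm x\}$ need not be discrete. Instead, argue on the interval after the last time $c$ equals $x$, noting that reaching $-x$ already costs length $\pi>\theta$.
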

\begin{remark}
	At this point, we note that the Hopf–Rinow theorem is famously wrong in the context of infinite-dimensional Riemannian Hilbert manifolds \cite{HopfrinowfalseAktkin, HopfrinowfalseEkeland}.  
	Moreover, geodesic completeness does not necessarily imply geodesic convexity. Interestingly, the latter does hold for half-Lie groups equipped with a strong right-invariant Riemannian metric \cite[Thm. 7.7.]{Bauer_2025}. For Hamiltonian versions of the Hopf-Rinow theorem in this setting we refer to \cite{HopfRinowHalfLiegroups, MaierRuscelliTonelli}.
\end{remark}
We conclude this section by presenting  in the next subsection the $\ell^q_{\RR}$-analogue of the recent and intriguing work of Bauer–Le Brigant–Lu–Maor~\cite{bauerLpFisherRaoMetricAmari2024}.

\subsection{\(\ell^q\)-Information Geometry}\label{ss:l_q_information_geometry}

By \(\ell^q_{\RR}\) we denote the space of all real sequences \((x_n)\) such that the \(\ell^q_{\RR}\)-norm \(\Vert \cdot \Vert_{\ell^q}\) is finite, i.e.,
\[
\Vert (x_n) \Vert_{\ell^q} := \left( \sum_{n=0}^{\infty} |x_n|^q \right)^{1/q} < \infty,
\]
where \(q \in (1, \infty)\). We adopt the notation \(q\) instead of \(p\), as in~\cite{bauerLpFisherRaoMetricAmari2024}, in order to avoid confusion with elements \((p_n) \in \Delta\).

We define \(\mathcal{U}_q\) as the following open subset of the \(\ell^q_{\RR}\)-unit sphere \(\mathbb{S}(\ell^q_{\RR})\):
\[
\mathcal{U}_q := \left\{ (x_n) \in \mathbb{S}(\ell^q_{\RR}) : x_n > 0 \quad \forall n \in \mathbb{N} \right\}.
\]
We equip \(\Delta\) with a differentiable structure such that the homeomorphism, called the \emph{\(q\)-root transform}, defined by
\begin{equation}\label{e: def q-root transform}
    \varPhi_q: \Delta \longrightarrow \mathcal{U}_q, \quad (p_n) \mapsto \left(p_n^{1/q}\right),
\end{equation}
becomes a diffeomorphism. For \(q \neq 2\), we denote by \(\Delta^q\) the set \(\Delta\) equipped with this differentiable structure. The tangent space at a point \((p_n) \in \Delta^q\) is then given by
\begin{equation*}\label{e: q diff structure on delta}
	T_{(p_n)}\Delta^q := \left\{ (v_n) \in \ell^1_{\mathbb{R}} : \sum_{n=0}^{\infty} v_n = 0 \quad \text{and} \quad \left(v_n p_n^{(1/q - 1)}\right) \in \ell^q_{\mathbb{R}} \right\}.
\end{equation*}

We next introduce the \(\ell^q\)-Fisher--Rao metric, which defines a Finsler metric for each \(q \in (1, \infty)\), by
\begin{equation}\label{e: def q-Fisher-Rao metric}
    \mathcal{F}^q_{(p_n)}\left( (v_n) \right) := \left( \sum_{n=0}^{\infty} \left\vert \frac{v_n}{p_n} \right\vert^q \cdot p_n \right)^{\frac{1}{q}},
    \quad \forall\ ((p_n), (v_n)) \in T\Delta^q.
\end{equation}
As in the \(q\)-root framework developed in~\cite{bauerLpFisherRaoMetricAmari2024}, the key point is that the \(q\)-root transform identifies this Finsler structure with the ambient \(\ell^q\)-geometry. In our setting, the same mechanism carries over from the finite-dimensional and \(L^q\)-contexts to the \(\ell^q\)-setting. More precisely, one obtains the following.

\begin{theorem}\label{T: q-root transform as isom}
    The \(q\)-root transform \(\varPhi_q\) is an isometry between \(\left( \Delta^q, \mathcal{F}^q \right)\) and \(\left( \mathcal{U}_q, \Vert \cdot \Vert_{\ell^q} \right)\).
\end{theorem}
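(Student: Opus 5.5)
The plan is a direct computation of the differential of \(\varPhi_q\), followed by a comparison of norms, in the spirit of the proof of \Cref{t: square root map is isometry}. Since the differentiable structure on \(\Delta^q\) is, by definition, the one making \(\varPhi_q\) a diffeomorphism onto \(\mathcal{U}_q\), only two things need checking. First, the differential \(d\varPhi_q\) maps \(T_{(p_n)}\Delta^q\) bijectively onto \(T_{\varPhi_q(p_n)}\mathcal{U}_q\). Second, it satisfies \(\Vert d\varPhi_q(v)\Vert_{\ell^q} = \mathcal{F}^q_{(p_n)}(v)\). Here the Finsler structure on \(\mathcal{U}_q\) is the restriction of the ambient \(\ell^q\)-norm to its tangent spaces.

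\textbf{Step 1: the differential.} Take a curve \(c(t)=(p_n(t))\) in \(\Delta^q\) with \(c(0)=(p_n)\) and \(\dot c(0)=(v_n)\). Componentwise,
\[
\frac{d}{dt}\Big|_{t=0} p_n(t)^{1/q} = \tfrac{1}{q}\, p_n^{1/q-1} v_n .
\]
So the candidate differential is \(d\varPhi_q(v) = \bigl(\tfrac1q p_n^{1/q-1}v_n\bigr)\). By the defining condition in \(T_{(p_n)}\Delta^q\), this sequence lies in \(\ell^q_{\RR}\).

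\textbf{Step 2: tangency.} The tangent space of \(\mathbb{S}(\ell^q_{\RR})\) at \(x=(x_n)\) with \(x_n>0\) is the kernel of the derivative of \(x\mapsto \Vert x\Vert_{\ell^q}^q\), namely
\[
\Bigl\{ w\in\ell^q_{\RR} : \sum_n x_n^{q-1}w_n = 0 \Bigr\}.
\]
The pairing converges by Hölder, since \(x^{q-1}\in\ell^{q'}\). Plugging in \(x_n=p_n^{1/q}\) and \(w_n=\tfrac1q p_n^{1/q-1}v_n\) gives \(x_n^{q-1}w_n = \tfrac1q v_n\). The constraint therefore becomes \(\sum_n v_n=0\), and conversely.

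The inverse map is \(w\mapsto (q\,p_n^{1-1/q}w_n)\), and it lands in \(\ell^1_{\RR}\): writing \(p_n^{1-1/q} = x_n^{q-1}\), Hölder gives \(\sum_n |p_n^{1-1/q}w_n|\le \Vert x\Vert_{\ell^q}^{q-1}\Vert w\Vert_{\ell^q}\). Hence \(d\varPhi_q\) is a linear bijection \(T_{(p_n)}\Delta^q\to T_{\varPhi_q(p_n)}\mathcal{U}_q\).

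\textbf{Step 3: the norm identity.} Compute
\[
\Vert d\varPhi_q(v)\Vert_{\ell^q}^q = q^{-q}\sum_n |v_n|^q p_n^{1-q} = q^{-q}\sum_n \Bigl|\tfrac{v_n}{p_n}\Bigr|^q p_n .
\]
This equals \(q^{-q}\bigl(\mathcal{F}^q_{(p_n)}(v)\bigr)^q\). The constant is exactly what the \(\tfrac14\) absorbs in the \(\ell^2\) case: for \(q=2\), \(\mathcal{G}^{\mathrm{FR}}\) carries the factor \(\tfrac14=2^{-2}\) that \Cref{t: square root map is isometry} needs.

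\textbf{Main obstacle.} The hardest step is this constant. As literally written, \(\mathcal{F}^q\) in \eqref{e: def q-Fisher-Rao metric} has no factor \(1/q\). So \(\varPhi_q\) is an isometry only up to the constant \(q\), or I must read \(\mathcal{F}^q\) with the normalization \(\tfrac1q\), following \cite{bauerLpFisherRaoMetricAmari2024}, or equivalently rescale the target norm. I would state the result with this normalization made explicit; the rest of the argument is unaffected.

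A secondary issue is making the chart-level identification precise. Since \(\Delta^q\) carries the pulled-back structure, \(\varPhi_q\) is smooth by definition. The description of \(T\Delta^q\) in the paper is then exactly the preimage of \(T\mathcal{U}_q\) under the componentwise formula, and Step 2 confirms this.
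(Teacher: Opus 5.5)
This is the same approach as the paper, which gives no written proof and only asserts that the $q$-root computation of \cite{bauerLpFisherRaoMetricAmari2024} (differential, tangency, comparison of norms) carries over to sequences. Your normalization concern is correct: with $\mathcal F^q$ as printed, $\Vert d\varPhi_q(v)\Vert_{\ell^q}=\tfrac1q\,\mathcal F^q_{(p_n)}(v)$, and already for $q=2$ one has $\mathcal F^2_{(p_n)}(v)=2\sqrt{\mathcal G^{\mathrm{FR}}_{(p_n)}(v,v)}$, so the literal statement is incompatible with \Cref{t: square root map is isometry} and needs either the factor $\tfrac1q$ in $\mathcal F^q$ or the transform $(p_n)\mapsto(q\,p_n^{1/q})$ onto the $\ell^q$-sphere of radius $q$; one further caveat is inherited from the paper rather than introduced by you, namely that replacing a coordinate $x_N$ by $-x_N$ keeps a point on $\mathbb S(\ell^q_{\RR})$ and moves it by only $2x_N\to 0$, so $\mathcal U_q$ is not actually open in the sphere, and your Step~2 identification of $T\mathcal U_q$ with the full tangent hyperplane rests on the paper's declared tangent spaces $T_{(p_n)}\Delta^q$ rather than on openness.
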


In \cite{bauerLpFisherRaoMetricAmari2024}, the construction of the Amari--\v{C}encov \(\alpha\)-connections in~\cite[Lemma~4.1]{bauerLpFisherRaoMetricAmari2024} and the Chern connection in~\cite[Thm.~4.6]{bauerLpFisherRaoMetricAmari2024} is driven by the fact that the \(q\)-root transform is an isometry. In the following, we indicate the corresponding constructions for \((\Delta^q, \mathcal{F}^q)\). The point is not to develop the full theory here, but rather to record that the basic constructions extend naturally to the present \(\ell^q\)-framework.

\begin{corollary}\label{L: alpha-connection on Deltaq}
For \(q \in (1,\infty)\), set \(\alpha := 1-\frac{2}{q}\) and \(q^* := \frac{q}{q-1}\).
Then, for each \((p_n) \in \Delta^q\), the map defined on tangent vectors \((v_n),(w_n)\in T_{(p_n)}\Delta^q\) by
\begin{equation}\label{e: alpha-connection on Deltaq}
    \overline{\nabla}^{(\alpha)}_{(v_n)} (w_n)
    =
    D_{(v_n)} (w_n)
    - \frac{1}{q^*}
    \left(
        \frac{v_n}{p_n}\, w_n
        -
        \Bigl(
            \sum_{n=0}^\infty \frac{v_n w_n}{p_n}
        \Bigr) p_n
    \right)
\end{equation}
defines the Amari--\v{C}encov \(\alpha\)-connection on \((\Delta^q, \mathcal{F}^q)\).
\end{corollary}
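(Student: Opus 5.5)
The plan is to mirror the proof of \cite[Lemma~4.1]{bauerLpFisherRaoMetricAmari2024}: pull back a natural connection on \(\mathcal{U}_q\) along the isometry \(\varPhi_q\) of \Cref{T: q-root transform as isom}. The natural candidate on \(\mathcal{U}_q\) is the \emph{projected flat connection}. We take the flat derivative \(D_X Y\) in \(\ell^q_{\RR}\) and remove a normal component along the direction \(x\), so that the result stays tangent to \(\mathbb{S}(\ell^q_{\RR})\). The tangent space is \(T_x\mathbb{S}(\ell^q_{\RR}) = \{u : \sum_n x_n^{q-1}u_n = 0\}\) for \(x_n>0\). The corresponding connection is therefore
\[
\nabla_X Y = D_X Y - \Bigl(\sum_n x_n^{q-1}(D_XY)_n\Bigr)x .
\]
Differentiating the constraint \(\sum_n x_n^{q-1}Y_n=0\) along \(X\) shows that the coefficient equals \(-(q-1)\sum_n x_n^{q-2}X_nY_n\). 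This is the \(\ell^q\)-analogue of the Levi-Civita connection of the round sphere.

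Next we push through \(\varPhi_q\). Write \(x_n=p_n^{1/q}\). A tangent vector \(v\) at \(p\) corresponds to \(X_n = \tfrac1q p_n^{1/q-1}v_n\). For a vector field \(w\) on \(\Delta^q\) we have \(Y_n=\tfrac1q p_n^{1/q-1}w_n\), and therefore
\[
(D_XY)_n = \tfrac1q p_n^{1/q-1}(D_vw)_n + \tfrac1q\bigl(\tfrac1q-1\bigr)p_n^{1/q-2}v_nw_n .
\]
We then apply \((d\varPhi_q)^{-1}\), which multiplies the \(n\)-th entry by \(q\,p_n^{1-1/q}\), to the projected expression. The first term gives \(D_vw\). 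The quadratic term gives \(-\tfrac{1}{q^*}\,\tfrac{v_nw_n}{p_n}\), using \(\tfrac1q-1=-\tfrac1{q^*}\). For the normal term \(c\,x\), the pulled-back vector is \(q\,c\,p_n\), where \(c=\tfrac{q-1}{q^2}\sum_n v_nw_n/p_n\) once the derivative terms cancel against the constraint \(\sum_n (D_vw)_n=0\). Collecting the pieces gives \(-\tfrac{q-1}{q}\bigl(\sum_n v_nw_n/p_n\bigr)p_n\) with the appropriate sign, and \(\tfrac{q-1}{q}=\tfrac1{q^*}\). This reproduces \eqref{e: alpha-connection on Deltaq}. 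The relation \(\alpha=1-2/q\) is then the standard identification of the \(q\)-root (\(\alpha\)-)embedding, as in the finite-dimensional case and \cite{bauerLpFisherRaoMetricAmari2024}. As a consistency check, \(q=2\) gives the Levi-Civita connection of \(\Gfr\) (\(\alpha=0\)).

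It remains to check that the formula defines a connection on \(\Delta^q\). There are three points.
\begin{itemize}
\item The right-hand side lies in \(T_{(p_n)}\Delta^q\). Its entries sum to zero, since \(\sum_n p_n=1\).
\item The conditions \(\ell^1\) and \(v_np_n^{1/q-1}\in\ell^q\) hold. These follow because the expression is \(d\varPhi_q^{-1}\) applied to an element of \(T_x\mathbb{S}(\ell^q_{\RR})\).
\item The sum \(\sum_n v_nw_n/p_n\) converges absolutely. This follows from H\"older's inequality, since \(v_np_n^{1/q-1}\in\ell^q\) and \(w_np_n^{1/q^*-1}\)-type weights combine to give \(\sum_n |v_nw_n|/p_n\le \|v p^{1/q-1}\|_{\ell^q}\|w p^{1/q^*-1}\|_{\ell^{q^*}}\).
\end{itemize}
The main obstacle is this last bound. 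We need \(w_np_n^{1/q^*-1}\in\ell^{q^*}\), which is not among the defining conditions of the tangent space when \(q<2\). I would first try to avoid it by writing the term as \(\tfrac{q^2}{?}\sum_n x_n^{q-2}X_nY_n\). This is the pairing of \(Y\in\ell^q\) with \(x^{q-2}X\), and \(x^{q-2}X\in\ell^{q^*}\) is guaranteed for \(q\ge 2\) since \(x\in\ell^q\) and \(X\in\ell^q\). For \(q<2\) I would instead define the connection only on the domain where the sum converges, as done in the \(L^q\)-setting of \cite{bauerLpFisherRaoMetricAmari2024}.
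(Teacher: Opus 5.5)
Your computation is correct and follows the same route as the paper. The paper's proof only says to transport the construction of \cite[Lemma~4.1]{bauerLpFisherRaoMetricAmari2024} through the isometry $\varPhi_q$, and that is exactly your pullback of the flat connection projected along $x$ onto $T\mathbb{S}(\ell^q_{\RR})$. Your sign bookkeeping for the normal coefficient should be made consistent: $\sum_n x_n^{q-1}(D_XY)_n=-\tfrac{q-1}{q^2}\sum_n v_nw_n/p_n$, so the normal part pulls back to $+\tfrac{1}{q^*}\bigl(\sum_n v_nw_n/p_n\bigr)p_n$.

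The obstacle you flag for $q<2$ is a genuine defect of the statement, not of your argument, and the paper does not address it. Take $p_n=2^{-(n+1)}$ and $v_n=(-1)^n\sqrt{p_n}$, with $v_0$ adjusted so that $\sum_n v_n=0$. This $v$ lies in $T_{(p_n)}\Delta^q$, since $v_np_n^{1/q-1}=\pm p_n^{1/q-1/2}\in\ell^q_{\RR}$. Yet $v_n^2/p_n=1$ for all $n\ge 1$, so $\sum_n v_nw_n/p_n=\infty$ whenever $w=v$ at this point. Hence the claim needs $q\ge 2$, where your H\"older bound settles well-definedness, or a restricted domain. The compact setting of \cite{bauerLpFisherRaoMetricAmari2024} does not justify that fix, because there smooth positive densities are bounded below and the issue never arises.
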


\begin{proof}
The construction is the natural \(\ell^q\)-analogue of the argument in \cite[Lemma~4.1]{bauerLpFisherRaoMetricAmari2024}. Using \Cref{T: q-root transform as isom}, the same computation carries over to the present sequence setting, with sums replacing integrals.
\end{proof}

We close this subsection by recording the corresponding existence result for the Chern connection on the Finsler manifold \((\Delta^q,\mathcal F^q)\). We note that, unlike in the case of strong Riemannian manifolds, a Chern connection need not exist for infinite-dimensional Finsler manifolds, even if the Finsler structure is strong. In the present setting, however, the \(q\)-root transform allows one to transfer the relevant construction from the ambient \(\ell^q\)-geometry.

\begin{corollary}
    \label{T: existence chern connection on Deltaq}
    The Finsler manifold \((\Delta^q,\mathcal F^q)\) admits a Chern connection.
\end{corollary}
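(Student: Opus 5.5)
The plan is to transport a Chern connection from \((\mathcal U_q,\Vert\cdot\Vert_{\ell^q})\) to \((\Delta^q,\mathcal F^q)\) along the isometry \(\varPhi_q\) of \Cref{T: q-root transform as isom}. The Chern connection is characterised intrinsically by two properties: torsion-freeness and almost-metric compatibility with respect to the fundamental tensor \(g_V\). Both are preserved under pullback by a Finsler isometry that is also a diffeomorphism. It therefore suffices to construct a Chern connection on \(\mathcal U_q\) and set \(\nabla^{\Delta}_X Y := (\varPhi_q)^{*}\nabla^{\mathcal U_q}_{(\varPhi_q)_*X}(\varPhi_q)_*Y\).

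\textbf{Step 1: the ambient space.} On the Banach space \(\ell^q_\RR\) with \(q\in(1,\infty)\), the norm \(F(v)=\Vert v\Vert_{\ell^q}\) is smooth away from \(0\) for \(q\ge 2\); for \(1<q<2\) it is at least \(C^2\) on directions with full support, which is the relevant regime. Its fundamental tensor is
\[
g_v(w,u)=\tfrac12\,\partial_s\partial_t\big|_{0}F(v+sw+tu)^2 .
\]
It is given explicitly in terms of \(\sum |v_n|^{q-2}w_nu_n\) and \(\bigl(\sum |v_n|^{q-2}v_nw_n\bigr)\bigl(\sum |v_n|^{q-2}v_nu_n\bigr)\), normalised by \(\Vert v\Vert^{2-q}\) and \(\Vert v\Vert^{2-2q}\) respectively. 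The flat derivative \(D\) is then a Chern connection on \(\ell^q_\RR\): it is torsion free, and \(g_v\) is constant along translations, so almost-metric compatibility holds trivially.

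\textbf{Step 2: restriction to the sphere.} I would induce a connection on the hypersurface \(\mathcal U_q\subset\mathbb S(\ell^q_\RR)\) by projecting \(D\) along the \(g_V\)-normal direction, as in \cite[Thm.~4.6]{bauerLpFisherRaoMetricAmari2024}. Concretely,
\[
\nabla_X Y = D_XY - \frac{g_V(D_XY,N_V)}{g_V(N_V,N_V)}N_V .
\]
Here \(N_V\) is the \(g_V\)-normal, i.e.\ the vector annihilating \(T\mathbb S(\ell^q_\RR)\) under \(g_V\). At \(x\in\mathcal U_q\) the tangent space is \(\{w:\sum x_n^{q-1}w_n=0\}\), and one can take \(N\) proportional to \(x\) itself. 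I would then verify two properties:
\begin{enumerate}
\item \(\nabla\) is torsion free, since the second fundamental term is symmetric, being built from \(D^2\) of the defining function \(\sum x_n^q-1\);
\item \(\nabla\) is almost \(g\)-compatible, by restricting the identity from Step 1 and noting that the normal component drops out on tangent vectors.
\end{enumerate}

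\textbf{Step 3: transport.} The final step is to pull back by \(\varPhi_q\) as described above. Formula \eqref{e: alpha-connection on Deltaq} with \(\alpha=1-2/q\) should appear as a consistency check, in the sense that the Chern connection differs from it by a Cartan-type tensor term.

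\textbf{Main obstacle.} The hard part is functional-analytic rather than algebraic. I must check that \(g_V\) is a bounded bilinear form, that the projection in Step 2 maps \(T\mathcal U_q\) into itself smoothly, and that all resulting sums converge. For \(q<2\), the weight \(|v_n|^{q-2}\) can blow up where \(v_n\) is small, so \(g_V\) may fail to be defined on all of \(\ell^q\). I would first restrict to the slit tangent bundle and use the positivity \(x_n>0\) on \(\mathcal U_q\), working with \(V\) of full support via the chart \(\varPhi_q\). Failing that, I would follow \cite{bauerLpFisherRaoMetricAmari2024} and define the connection on the subbundle where \(g_V\) is nondegenerate and bounded, then argue by density.
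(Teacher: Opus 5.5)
Your overall route is the paper's: its proof also transports, via \Cref{T: q-root transform as isom}, the construction of \cite[Thm.~4.6]{bauerLpFisherRaoMetricAmari2024} from the \(\ell^q\)-side, and your Step~3 is fine. The gap is Step~2, which does not produce a Chern connection under either reading of \(N_V\).

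Suppose you take \(N\) proportional to \(x\). Then you are using the \(g_x\)-normal (indeed \(g_x(x,w)=\sum_n x_n^{q-1}w_n\)), not the \(g_V\)-normal. Your connection becomes \(\nabla_XY=D_XY+(q-1)\bigl(\sum_n x_n^{q-2}X_nY_n\bigr)x\), and its pullback by \(\varPhi_q\) is exactly \eqref{e: alpha-connection on Deltaq}. So it is the \(\alpha\)-connection itself, contrary to your Step~3 expectation. Its geodesics satisfy \(\ddot\gamma\parallel\gamma\). By contrast, \(\mathcal F^q\)-geodesics satisfy \(g_{\dot\gamma}(\ddot\gamma,Z)=0\) for all \(Z\in T_\gamma\mathcal U_q\), and for \(q\neq2\) the vector \(x\) is in general not \(g_V\)-orthogonal to \(T_x\mathcal U_q\).

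Suppose instead you use the genuine \(g_V\)-normal. Torsion-freeness survives, but your compatibility check drops a term. Write \(D_XV=\nabla_XV+h_V(X,V)N_V\). The ambient identity then restricts to
\[
X\bigl(g_V(Y,Z)\bigr)=g_V(\nabla_XY,Z)+g_V(Y,\nabla_XZ)+2C_V(\nabla_XV,Y,Z)+2h_V(X,V)\,C_V(N_V,Y,Z).
\]
The normal part drops out of the \(g_V\)-terms but not out of the Cartan term. On the strictly convex sphere \(h_V(V,V)\neq0\). Moreover, \(C_V(N_V,\cdot,\cdot)\) restricted to \(T_x\mathcal U_q\) is a factor \((q-1)(q-2)\) times a generically nonzero expression. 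Hence a Chern connection, if it exists, is your projection plus a correction tensor \(A^V\) solving a Koszul-type system, e.g.\ \(g_V(A^V(X,V),Z)=h_V(V,V)\,C_V(N_V,X,Z)\) for all \(Z\in T_x\mathcal U_q\). Building \(A^V\) requires representing such Cartan covectors through \(g_V\) inside \(T_x\mathcal U_q\). That representability is exactly where existence can fail in infinite dimensions, and your Step~2 bypasses it.

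The analytic side is also worse than your last paragraph suggests, and neither full support of \(V\) nor \(x_n>0\) helps. The \(\ell^q\)-norm is \(C^\infty\) on \(\ell^q\setminus\{0\}\) only for even integers \(q\). For \(1<q<2\), every \(V\in\ell^q\) has \(V_n\to0\), so \(g_V(e_n,e_n)\ge(q-1)\Vert V\Vert_{\ell^q}^{2-q}|V_n|^{q-2}\to\infty\). Thus \(g_V\) is unbounded at every reference vector, because the singular weight involves \(V\), not the base point. For \(2<q<3\) the Cartan tensor is unbounded for the same reason.

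For \(q>2\), solving \(g_V(N,\cdot)=\lambda\,(x_n^{q-1})\) forces \(N_n=c\,x_n^{q-1}|V_n|^{2-q}+c'V_n\) with \(c\neq0\). This leaves \(\ell^q\) once \(V_n\) decays much faster than \(x_n\): take \(q=4\), \(x_n\propto(n+1)^{-1/2}\), \(V_n=e^{-n}\) for \(n\geq1\), and \(V_0\) chosen so that \(V\) is tangent. This is not a density issue. Torsion-freeness and almost-compatibility alone give \(g_V(\nabla^V_VV,Z)=g_V(D_VV,Z)\) for all tangent \(Z\). So \(D_VV-\nabla^V_VV\), which is transversal because \(D^2\phi(V,V)\neq0\), would itself be a nonzero \(g_V\)-normal in \(\ell^q\).

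Consequently, with reference vectors ranging over all of \(T_x\mathcal U_q\setminus\{0\}\), no Chern connection can exist at such \((x,V)\). Any correct argument must first restrict the admissible reference vectors. Neither your fallback nor the paper's one-line proof addresses this.
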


\begin{proof}
Using \Cref{T: q-root transform as isom}, one obtains the Chern connection on \((\Delta^q,\mathcal F^q)\) by the same construction as in \cite[Thm.~4.6]{bauerLpFisherRaoMetricAmari2024}, adapted to the present \(\ell^q\)-setting.
\end{proof}
\section{$\ell^2$-Information geometry of an optimization problem}\label{s: 3}
The aim of this section is to use the previously developed geometric framework of \(\ell^2\)-information geometry to extend the finite-dimensional information-geometric approach of \cite{faybusovich_hamiltonian_1991}, originally developed for finite-dimensional linear programming problems, to the infinite-dimensional setting. First, in \Cref{ss:Solutions of optimization problem via a gradient flow}, we achieve this by means of a gradient flow in that framework. Later, in \Cref{ss:The e-geodesic flow}, we interpret these gradient flows as geodesic flows of a certain affine connection. Along the way, we prove that this geodesic flow is geodesically complete, i.e., that geodesics exist for all time.

\subsection{Solutions of the optimization problem via a gradient flow}\label{ss:Solutions of optimization problem via a gradient flow}
We consider the following linear programming problem on the closure \(\bar{\Delta}\) of the \(\ell^2\)-probability simplex:
\begin{equation}\tag{LP}\label{eq:ell2_LPP}
	\max_{(p_n)\in \bar{\Delta}} \langle (c_n), (p_n) \rangle_{\ell^2_\mathbb{R}},
	\qquad (c_n)\in \ell^2_\mathbb{R}.
\end{equation}
Defining the smooth function
\begin{equation}\label{eq:def_F_cn}
F_{(c_n)}:\Delta\longrightarrow\mathbb{R},
\qquad (p_n)\longmapsto \langle (c_n), (p_n) \rangle_{\ell^2},
\end{equation}
the problem \eqref{eq:ell2_LPP} can be interpreted as the problem of maximizing this function over \(\bar{\Delta}\). We now describe the gradient flow lines of \(F_{(c_n)}\) with respect to \((\Delta,\mathcal{G}^{\mathrm{FR}})\) and show that they converge exponentially fast to solutions of \eqref{eq:ell2_LPP}.

\begin{proposition}
\label{prop:integral-curves}
Let \(t \mapsto (p_n)(t)\) be a gradient flow line of \(F_{(c_n)}\), defined in \eqref{eq:def_F_cn}, on \((\Delta,\mathcal{G}^{\mathrm{FR}})\) with initial value \((p_n)(0) = (p_n)_0 \in \Delta\). Then, for all \(n \in \mathbb{N}\) and all \(t \in [0,\infty)\), the components of \((p_n)(t)\) are given by
\begin{equation}\label{eq:integral-curves}
	p_n(t)=\frac{p_n(0)e^{c_n t}}{\sum_{k=0}^{\infty} p_k(0)e^{c_k t}},\quad \forall t\in [0,\infty).
\end{equation}
If, in addition, the sequence \((c_n)\) in \eqref{eq:ell2_LPP} is strictly monotonically decreasing, then the limit
\[
p_{\max}:=\lim_{t\to\infty}(p_n)(t)
\]
exists and is a solution of \eqref{eq:ell2_LPP}.
\end{proposition}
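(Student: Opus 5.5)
The plan is to compute the Fisher--Rao gradient of \(F_{(c_n)}\) explicitly, solve the resulting ODE componentwise, and then check that the explicit solution stays in \(\Delta\) and converges.

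\textbf{Step 1: the gradient.} At \((p_n)\in\Delta\), the differential is \(dF_{(c_n)}(v)=\sum_n c_n v_n\) for \((v_n)\in T_{(p_n)}\Delta\). I seek \((g_n)\in T_{(p_n)}\Delta\) with \(\frac14\sum_n g_n v_n/p_n=\sum_n c_nv_n\) for all admissible \((v_n)\). Since \(\sum v_n=0\), the candidate is
\[
g_n=4p_n\Bigl(c_n-\sum_k c_kp_k\Bigr).
\]
This lies in \(T_{(p_n)}\Delta\): \(\sum g_n=0\), and \(g_n/\sqrt{p_n}=4\sqrt{p_n}(c_n-\bar c)\in\ell^2\) because \(c\in\ell^2\subset\ell^\infty\). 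Uniqueness follows from the strong Riemannian structure, via \Cref{t: square root map is isometry}. The gradient flow is therefore the replicator equation \(\dot p_n=4p_n(c_n-\langle c,p\rangle)\). The statement's formula has no factor \(4\), so I would either absorb the constant into a time rescaling or adopt the paper's normalisation; I will present it as \(\dot p_n=p_n(c_n-\langle c,p\rangle)\) up to that convention.

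\textbf{Step 2: the explicit solution.} Define \(p_n(t)\) by \eqref{eq:integral-curves} and set \(Z(t)=\sum_k p_k(0)e^{c_kt}\). Since \(c\) is bounded, \(e^{-\|c\|_\infty t}\le Z(t)\le e^{\|c\|_\infty t}\), and \(Z\) is differentiable with \(Z'=\sum c_kp_k(0)e^{c_kt}\) by dominated convergence. Then \(p(t)\in\Delta\), and
\[
\dot p_n=p_n\bigl(c_n-Z'/Z\bigr)=p_n\bigl(c_n-\langle c,p(t)\rangle\bigr).
\]
I must also verify that \(t\mapsto p(t)\) is a \(C^1\) curve in the pulled-back structure. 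Equivalently, \(\sqrt{p_n(t)}=\sqrt{p_n(0)}\,e^{c_nt/2}Z^{-1/2}\) must be \(C^1\) into \(\ell^2\); this follows from uniform bounds on \(e^{c_nt/2}\) and its \(t\)-derivative. Uniqueness of integral curves then identifies the gradient line with this formula. For uniqueness I transport the vector field to \(\U\) via \(\varPhi\), where it reads \(x_n\mapsto 2x_n(c_n-\sum c_kx_k^2)\). This field is locally Lipschitz on \(\ell^2\), so Picard--Lindelöf applies.

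\textbf{Step 3: the limit.} Assume \(c\) is strictly decreasing. Then \(c_0>c_k\) for all \(k\ge1\), and \(\sup_{k\ge1}c_k=c_1<c_0\). We have
\[
p_n(t)=\frac{p_n(0)e^{(c_n-c_0)t}}{p_0(0)+\sum_{k\ge1}p_k(0)e^{(c_k-c_0)t}}.
\]
The denominator tends to \(p_0(0)\), since the tail is bounded by \(e^{(c_1-c_0)t}\). Hence \(p_0(t)\to1\) and \(\sum_{n\ge1}p_n(t)\le e^{(c_1-c_0)t}/p_0(0)\to0\). So \(p(t)\to e_0=(1,0,0,\dots)\) in \(\ell^1\), and thus in \(\ell^2\), exponentially fast. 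Finally, \(e_0\in\bar\Delta\) maximises \eqref{eq:ell2_LPP}, since \(\langle c,p\rangle\le c_0\sum p_n=c_0\) for all \(p\in\bar\Delta\).

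The main obstacle is Step 2's regularity and uniqueness in the nonstandard differentiable structure. I would handle it entirely on the sphere side via \(\varPhi\), where everything becomes a smooth ODE in the Hilbert space \(\ell^2\).
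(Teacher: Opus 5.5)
Your proposal is correct and follows essentially the paper's route: compute the Fisher--Rao gradient, check the explicit curve and use Lipschitz uniqueness, then take the limit; the only real difference is that you get the gradient by Riesz representation on $\Delta$ instead of projecting $2\,\mathrm{Diag}(x_n)(c_n)$ onto $T_{(x_n)}\U$ and pulling back through $\varPhi$. Your factor-$4$ remark is justified: with the $\frac14$ in \eqref{e:defi_l2_Fisher_Rao} the gradient is $4\bigl(\operatorname{Diag}(p_n)(c_n)-\langle (p_n),(c_n)\rangle_{\ell^2}(p_n)\bigr)$, so the paper's \eqref{def:W_c} drops this factor and the actual flow line is \eqref{eq:integral-curves} with $t$ replaced by $4t$, which leaves the limit statement unchanged.
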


\begin{proof}
By applying \Cref{t: square root map is isometry} and noting that the square root map extends to a homeomorphism from the closure \( \bar{\Delta} \) onto \( \bar{\U} \), we find that \eqref{eq:ell2_LPP} is equivalent to the following optimization problem over the closure \( \bar{\U} \) of \( \U \) in \( \Slr \):
\begin{equation}\tag{NLP}\label{eq: optimization problem sphere}
	\max_{(x_n) \in \bar{\U}} \langle (c_n), (x_n^2) \rangle_{\ell^2_\R}, \quad (c_n)\in \ell^2_\R.
\end{equation}

To analyze this, we define the smooth function
\[
H_{(c_n)}: \ell^2(\RR)\longrightarrow \RR: (x_n)\mapsto \langle (c_n), (x_n^2) \rangle_{\ell^2},
\]
where \( (c_n) \in \ell^2_{\mathbb{R}} \) is fixed. The problem \eqref{eq: optimization problem sphere} then reduces to maximizing \( H_{(c_n)} \) over \( \bar{\U} \). The gradient of \( H_{(c_n)} \) in the ambient Hilbert space \( \ell^2(\RR) \) is given by
\[
\nabla H_{(c_n)}(x_n) = 2\,\mathrm{Diag}(x_n)(c_n) := 2(x_n c_n).
\]
Projecting this vector onto the tangent space \( T_{(x_n)}\U \) using the orthogonal projection
\[
P_{(x_n)}(v_n) = (v_n) - \langle (v_n), (x_n) \rangle_{\ell^2} (x_n),
\]
yields the Riemannian gradient of \( H_{(c_n)} \) on the sphere \( (\U, \Glt) \):
\begin{equation}\label{eq: gradient on sphere}
	\mathrm{grad}^{\Glt}(H_{(c_n)})(x_n) = P_{(x_n)}\left( 2\,\mathrm{Diag}(x_n)(c_n) \right), \quad \forall (x_n)\in\U.
\end{equation}

By invoking \Cref{t: square root map is isometry} once more, we obtain from \eqref{eq: gradient on sphere} that the Riemannian gradient of the function \(F_{(c_n)}\), defined in \eqref{eq:def_F_cn}, on \( (\Delta, \Gfr) \) is given by
\begin{equation}\label{def:W_c}
\mathrm{grad}^{\Gfr}(F_{(c_n)})(p_n) = \operatorname{Diag}(p_n)(c_n) - \langle (p_n), (c_n) \rangle_{\ell^2_\R} (p_n), \quad \forall (p_n) \in \Delta.
\end{equation}
We denote \(\mathrm{grad}^{\Gfr}(F_{(c_n)})\) by \( W_{(c_n)} \) for convenience. It is standard to verify that \( W_{(c_n)} \) satisfies a global Lipschitz condition. Hence, for any \( (c_n) \in \ell^2_\R \) and initial condition \( (p_n)_0 \in \Delta \), there exists a unique solution to the initial value problem
\begin{equation}\label{eq:IVP}
	\frac{\d}{\d t}(p_n)(t) = W_{(c_n)}(p_n), \quad (p_n)(0) = (p_n)_0 \in \Delta,
\end{equation}
i.e., the gradient flow of \( F_{(c_n)} \) is well defined on \( \Delta \).

Another straightforward computation confirms that the curves \( (p_n)(t) \) given in \eqref{eq:integral-curves} are indeed solutions to \eqref{eq:IVP}.

Furthermore, if the sequence \( (c_n) \) in \eqref{eq:ell2_LPP} is strictly monotonically decreasing, then the solution converges to the point
\[
p_{\max} := \lim_{t \to \infty} (p_n)(t) = (1, 0, \dots) \in \bar{\Delta},
\]
which is clearly a solution of \eqref{eq:ell2_LPP}.
\end{proof}

In finite-dimensional information geometry, the analogues of the gradient flow lines in \Cref{prop:integral-curves} are known as \(e\)-geodesics, i.e., geodesics with respect to a certain affine connection. In the next subsection, we extend this finite-dimensional observation to the setting of \(\ell^2\)-information geometry.
\subsection{The \(e\)-geodesic flow in \(\ell^2\)-information geometry and the optimization problem~\eqref{eq:ell2_LPP}}\label{ss:The e-geodesic flow}
We begin by defining the \(e\)-connection.

\begin{definition}[Exponential connection]
The \emph{\(e\)-connection}
\[
\nabla^{(e)}:\mathfrak{X}(\Delta)\times\mathfrak{X}(\Delta)\longrightarrow \mathfrak{X}(\Delta), 
\qquad ((V_n),(W_n))\longmapsto \nabla^{(e)}_{(V_n)}(W_n),
\]
is defined pointwise at each \((p_n)\in \Delta\), where the \(n\)-th entry of the sequence
\[
\restr{\nabla^{(e)}_{(V_n)}(W_n)}{(p_n)}
:=
\bigl(\nabla^{(e)}_{(v_n)}(w_n)\bigr)_n
\]
is given by
\begin{equation}\label{eq:def_e_con_comp}
\bigl(\nabla^{(e)}_{(v_n)}(w_n)\bigr)_n
:=
p_n
\restr{\left(
D_{(V_n)}\!\left(\frac{W_n}{p_n}\right)
-
\sum_{k=0}^{\infty}
p_k\,D_{(V_n)}\!\left(\frac{W_k}{p_k}\right)
\right)}{(p_n)},
\end{equation}
where \(D_{(V_n)}\) denotes the directional derivative along \((V_n)\).
\end{definition}

Next, we verify that the \(e\)-connection \(\nabla^{(e)}\) is indeed an affine connection.

\begin{proposition}[Exponential connection is an affine connection]
The operator \(\nabla^{(e)}\) defines an affine connection on \(T\Delta\).
\end{proposition}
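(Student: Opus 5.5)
To prove the proposition, I will check three things for the operator defined by \eqref{eq:def_e_con_comp}. First, that \(\nabla^{(e)}_{(V_n)}(W_n)\) is again a smooth vector field on \(\Delta\), meaning its value at each \((p_n)\) lies in \(T_{(p_n)}\Delta\) as described in \eqref{d:differentiable_structure}. Second, that it is \(C^\infty(\Delta)\)-linear in \((V_n)\). Third, that it is \(\mathbb{R}\)-linear in \((W_n)\) and satisfies the Leibniz rule there. Throughout I will work in the coordinates \(u_n := W_n/p_n\), which I write as \(u=(u_n)\). A tangent vector \(w\in T_{(p_n)}\Delta\) corresponds exactly to a sequence \(u\) with \((\sqrt{p_n}\,u_n)\in\ell^2_\RR\) and \(\sum_n p_n u_n=0\); note that \(\sum p_n|u_n|<\infty\) follows by Cauchy--Schwarz. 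In these coordinates the connection takes the form
\[
\nabla^{(e)}_V W = p\,\bigl(D_V u - \mathbb{E}_p[D_V u]\bigr),
\]
which is the sequence analogue of the centred \(e\)-connection. Via \Cref{t: square root map is isometry} I will transport vector fields to \(\U\), so that a smooth \(W\) corresponds to a smooth \(\ell^2\)-valued map \(x\mapsto (x_n u_n)\), and I will compute \(D_V\) there.

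\textbf{Step 1: tangency.} The \(n\)-th entry has the form \(p_n(a_n - \sum_k p_k a_k)\) with \(a_n = D_V(W_n/p_n)\). The entries sum to \(\sum p_n a_n - \sum p_k a_k = 0\), provided \(\sum p_k a_k\) converges. Dividing the \(n\)-th entry by \(\sqrt{p_n}\) gives \(\sqrt{p_n}a_n - \sqrt{p_n}\,c\) with \(c=\sum_k p_k a_k\). The second term lies in \(\ell^2\) because \(\sum p_n=1\). So everything reduces to showing that \((\sqrt{p_n}a_n)\in\ell^2_\RR\).

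For this I write
\[
\sqrt{p_n}\,D_V(W_n/p_n) = D_V\bigl(W_n/\sqrt{p_n}\bigr) - \tfrac12 \frac{W_n}{\sqrt{p_n}}\frac{V_n}{p_n}.
\]
The first term is the derivative of the smooth \(\ell^2\)-valued map \(p\mapsto (W_n/\sqrt{p_n})\), which is essentially \(\Phi_*W\) rescaled. The second term is the product of an \(\ell^2\) sequence with the sequence \((V_n/p_n)\).

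This is where I expect the \textbf{main obstacle}. The sequence \(V_n/p_n = (V_n/\sqrt{p_n})/\sqrt{p_n}\) need not be bounded, so the product \((W_n/\sqrt{p_n})(V_n/p_n)\) need not lie in \(\ell^2\). The same issue appears in the first term. My first attempt is to use the product rule so that the two problematic contributions cancel, i.e. to show that
\[
D_V(W_n/\sqrt{p_n}) - \tfrac12 (W_n/\sqrt{p_n})(V_n/p_n)
\]
is exactly \(\sqrt{p_n}\) times the \(n\)-th component of \(D_V\) of the smooth field \(u\) expressed in the chart \(\Phi\). In other words, I want to recognise \(\nabla^{(e)}\) as \(\nabla^{\mathrm{LC}}\) plus a tensor, so that tangency follows once that difference tensor is bounded. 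If this does not work in general, I would interpret "affine connection" in the sense that the formula is well defined on the natural dense class of fields where \((V_n/p_n)\) is bounded. I would note this restriction explicitly, just as \Cref{r: the right diff strcuture} does for the metric.

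\textbf{Steps 2 and 3: linearity and Leibniz.} \(C^\infty\)-linearity in \(V\) is immediate, because \(D_{fV}=fD_V\) holds pointwise in each component, including inside the convergent sum over \(k\). \(\mathbb{R}\)-linearity in \(W\) is clear. For the Leibniz rule, take \(f\in C^\infty(\Delta)\). Then \(D_V(fW_n/p_n) = (D_Vf)W_n/p_n + fD_V(W_n/p_n)\). Substituting this, the \(D_Vf\) part contributes
\[
(D_Vf)\Bigl(W_n - p_n\sum_k W_k\Bigr) = (D_Vf)\,W_n,
\]
since \(\sum_k W_k=0\). Hence \(\nabla^{(e)}_V(fW)=(D_Vf)W+f\nabla^{(e)}_VW\), which completes the verification.
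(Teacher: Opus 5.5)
Your Steps 2 and 3 are exactly the paper's argument. It checks linearity in both slots and the Leibniz rule via \(\sum_k W_k=0\), as you do. For well-definedness, however, the paper only asserts \(\sum_n\bigl(\nabla^{(e)}_{(v_n)}(w_n)\bigr)_n=0\) and never addresses the \(\ell^2\)-condition in \eqref{d:differentiable_structure}. You are right that this is the crux. But your Step 1 leaves it open, and the cancellation you hope for does not occur.

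Put \(x=\sqrt{p}\), \(\xi=\mathrm{d}\varPhi(V)\) and \(\eta=\mathrm{d}\varPhi(W)\), so that \(V_n=2x_n\xi_n\) and \(W_n=2x_n\eta_n\). Your first term is harmless, contrary to your remark: \(D_V(W_n/\sqrt{p_n})=2(D_\xi\eta)_n\). The whole problem is the second term, which equals \(2\xi_n\eta_n/x_n\). Let \(\nabla^{\mathrm{LC}}_\xi\eta=D_\xi\eta-\langle x,D_\xi\eta\rangle x\) be the Levi-Civita connection of \((\U,\Glt)\). Using \(\langle x,D_\xi\eta\rangle=-\langle\xi,\eta\rangle\), which follows from \(\langle x,\eta\rangle\equiv0\), one gets \(\sum_k p_kD_V(W_k/p_k)=-4\langle\xi,\eta\rangle\) and
\[
\mathrm{d}\varPhi\bigl(\nabla^{(e)}_VW\bigr)=\nabla^{\mathrm{LC}}_\xi\eta-\Bigl(\frac{\xi_n\eta_n}{x_n}\Bigr)_n+\langle\xi,\eta\rangle_{\ell^2}\,x .
\]
So \(\nabla^{(e)}\) differs from \(\nabla^{\mathrm{LC}}\) by a tensor that is not \(\ell^2\)-valued, and no bounded-tensor decomposition exists.

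Here is a concrete counterexample. Take \(p_n\propto 4^{-n}\), and \(\xi_n=1/(n+1)\) for \(n\ge 1\), with \(\xi_0\) fixed by \(\langle\xi,x\rangle=0\). Let \(V=W\) be the pullback of the smooth field \(y\mapsto\xi-\langle\xi,y\rangle y\) on \(\U\). At \(p\) this gives \(\nabla^{\mathrm{LC}}_\xi\eta=0\) and \(\mathrm{d}\varPhi(\nabla^{(e)}_WW)_n=\|\xi\|^2x_n-\xi_n^2/x_n\), which grows like \(2^n/(n+1)^2\). Hence \(\nabla^{(e)}_WW\) is not a section of \(T\Delta\).

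Therefore Step 1 cannot be completed for the spherical structure by any argument, and the proposition as formulated does not hold. What is true is your fallback, sharpened by the formula above: \(\nabla^{(e)}_VW(p)\in T_p\Delta\) whenever \((V_n/p_n)\in\ell^\infty\), for arbitrary smooth \(W\). This is consistent with \Cref{L: alpha-connection on Deltaq}, where \(\alpha=1-\frac{2}{q}\) places the \(e\)-connection \(\alpha=1\) at the excluded endpoint \(q=\infty\).

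The restricted statement still covers the curves of \Cref{prop:integral-curves}, since there \(\dot p_n/p_n=c_n-\langle (c_n),(p_n)\rangle_{\ell^2}\) is bounded. But it gives a connection only along a restricted class of directions (or for a stronger differentiable structure), not an affine connection on \(T\Delta\). You should state and prove it as such, rather than presenting it as a proof of the proposition.
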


\begin{proof}
Using the definition of the tangent space \(T_{(p_n)}\Delta\) in \eqref{d:differentiable_structure}, it is straightforward to check that \(\nabla^{(e)}\) is well defined, namely that
\[
\sum_{n=0}^{\infty}\bigl(\nabla^{(e)}_{(v_n)}(w_n)\bigr)_n = 0.
\]

It remains to verify that \(\nabla^{(e)}\) satisfies the axioms of an affine connection.

\textit{Linearity in the first argument.}
By the linearity of the directional derivative and \eqref{eq:def_e_con_comp}, we have
\[
\nabla^{(e)}_{f(U_n)+g(V_n)}(W_n)
=
f\nabla^{(e)}_{(U_n)}(W_n)
+
g\nabla^{(e)}_{(V_n)}(W_n)
\]
for all \(f,g\in C^{\infty}(\Delta)\) and all \((U_n),(V_n),(W_n)\in \mathfrak{X}(\Delta)\).

\textit{Linearity in the second argument.}
This follows directly from the definition of \(\nabla^{(e)}\) in \eqref{eq:def_e_con_comp}.

\textit{Leibniz rule.}
Let \(f\in C^\infty(\Delta)\). By \eqref{eq:def_e_con_comp}, together with the Leibniz rule for directional derivatives, we compute
\begin{align*}
\bigl(\nabla^{(e)}_{(V_n)}(f(W_n))\bigr)_n
&=
p_n\restr{\left(
D_{(V_n)}(f)\frac{W_n}{p_n}
+
f\,D_{(V_n)}\!\left(\frac{W_n}{p_n}\right)\right)}{(p_n)}\\
&\quad
- p_n \restr{\left(
\sum_{k=0}^{\infty}p_k\,D_{(V_n)}(f)\frac{W_k}{p_k}
+
\sum_{k=0}^{\infty}p_k\,f\,D_{(V_n)}\!\left(\frac{W_k}{p_k}\right)
\right)}{(p_n)}.
\end{align*}
Since \(D_{(V_n)}(f)\) does not depend on the summation index \(k\), and using the identity for tangent vectors from \eqref{d:differentiable_structure}, we obtain
\begin{align*}
\bigl(\nabla^{(e)}_{(V_n)}(f(W_n))\bigr)_n
&=
p_n\restr{\left(
D_{(V_n)}(f)\frac{W_n}{p_n}
+
f\,D_{(V_n)}\!\left(\frac{W_n}{p_n}\right)\right)}{(p_n)}\\
&\quad
- p_n\restr{\left(
D_{(V_n)}(f)\sum_{k=0}^{\infty}W_k
+
f\sum_{k=0}^{\infty}p_k\,D_{(V_n)}\!\left(\frac{W_k}{p_k}\right)
\right)}{(p_n)} \\
&=
\bigl(D_{(V_n)}(f)\,W_n\bigr)_n
+
f\bigl(\nabla^{(e)}_{(V_n)}(W_n)\bigr)_n \quad \forall n\in \NN.
\end{align*}
Hence
\[
\nabla^{(e)}_{(V_n)}\bigl(f(W_n)\bigr)
=
D_{(V_n)}(f)\,(W_n)
+
f\,\nabla^{(e)}_{(V_n)}(W_n)
\quad \forall f\in C^{\infty}(\Delta),\ \forall (V_n),(W_n)\in \mathfrak{X}(\Delta).
\]

Thus \(\nabla^{(e)}\) satisfies all axioms of an affine connection.
\end{proof}

We now give the promised interpretation that the gradient flow lines solving \eqref{eq:ell2_LPP} are precisely the \(e\)-geodesics. To this end, we introduce the following definition.

\begin{definition}[\(e\)-geodesics]\label{def:e-geodesic}
A smooth curve \(t\mapsto (p_n)(t)\in C^{\infty}(I,\Delta)\) is called an \emph{\(e\)-geodesic} of \((\Delta,\nabla^{(e)})\) if
\[
\nabla^{(e)}_{(\dot p_n)}(\dot p_n)=0.
\]
\end{definition}

Since the infinite-dimensional manifold \(\Delta\) is not compact, it is a priori unclear whether \((\Delta,\nabla^{(e)})\) is geodesically complete, that is, whether \(e\)-geodesics exist globally in time. In the next theorem, we prove that \((\Delta,\nabla^{(e)})\) is geodesically complete.

To establish the promised relation between \eqref{eq:ell2_LPP} and \(e\)-geodesics, we derive the following useful characterization.

\begin{theorem}[Geodesic completeness]\label{prop:e_geodesics_equation}
Let \(t\mapsto (p_n)(t)\) be a smooth curve in \(\Delta\) with initial data
\[
(p_n)(0)=(p_n)_0\in \Delta,
\qquad
(\dot p_n)(0)=(\dot p_n)_0\in T_{(p_n)_0}\Delta.
\]
Then \(t\mapsto (p_n)(t)\) is an \(e\)-geodesic of \((\Delta,\nabla^{(e)})\) if and only if there exists \(\lambda\in\mathbb{R}\) such that, for the sequence \((a_n)\) defined by
\[
a_n:=\frac{\dot p_n(0)}{p_n(0)}+\lambda
\qquad \forall n\in\mathbb{N},
\]
the curve is of the form
\begin{equation}\label{eq:explicit_form_e_geodesics}
p_n(t)=\frac{p_n(0)e^{a_n t}}{\sum_{k=0}^{\infty} p_k(0)e^{a_k t}}
\qquad \forall n\in\mathbb{N},\ \forall t\in\mathbb{R}.
\end{equation}
Consequently, \((\Delta,\nabla^{(e)})\) is geodesically complete.
\end{theorem}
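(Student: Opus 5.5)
The idea is to turn the geodesic equation into an ODE for the logarithmic derivative, and then solve that ODE explicitly.

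\textbf{Rewriting the equation.} Along a curve $t\mapsto (p_n)(t)$, set $u_n(t):=\dot p_n(t)/p_n(t)$. Choosing $(V_n)=(W_n)=(\dot p_n)$ in \eqref{eq:def_e_con_comp}, the directional derivative $D_{(\dot p_n)}(\dot p_n/p_n)$ becomes the time derivative $\dot u_n$. The geodesic equation $\nabla^{(e)}_{(\dot p_n)}(\dot p_n)=0$ then reads
\[
p_n(t)\Bigl(\dot u_n(t)-\sum_{k=0}^\infty p_k(t)\dot u_k(t)\Bigr)=0\qquad\forall n .
\]
Since $p_n>0$ on $\Delta$, this is equivalent to $\dot u_n(t)=\mu(t)$ for all $n$, where $\mu(t):=\sum_k p_k\dot u_k$ does not depend on $n$.

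\textbf{Necessity.} Integrating gives $u_n(t)=u_n(0)+m(t)$ with $m(t)=\int_0^t\mu$. Integrating once more gives
\[
\log p_n(t)=\log p_n(0)+u_n(0)\,t+M(t),\qquad M(t)=\int_0^t m .
\]
The constraint $\sum_n p_n(t)=1$ forces $e^{-M(t)}=\sum_k p_k(0)e^{u_k(0)t}$. Hence $p_n(t)$ has the form \eqref{eq:explicit_form_e_geodesics} with $a_n=u_n(0)$, i.e.\ $\lambda=0$. The normalization is invariant under $a_n\mapsto a_n+\lambda$, so any $\lambda$ gives the same curve, which explains the freedom in the statement.

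\textbf{Sufficiency.} Differentiate \eqref{eq:explicit_form_e_geodesics}. With $Z(t)=\sum_k p_k(0)e^{a_kt}$ one gets $u_n=a_n-\dot Z/Z$. Thus $\dot u_n=-(\log Z)''$ is independent of $n$, so the equation above holds. Evaluating at $t=0$, and using $\sum_k\dot p_k(0)=0$, gives $u_n(0)=a_n-\lambda-0+\lambda$, i.e.\ the curve has the prescribed initial data.

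\textbf{Main obstacle: the analysis in infinite dimensions.} The formal computation above needs two justifications.
\begin{enumerate}
\item[(a)] Differentiating the series $Z$ and $\sum_n p_n$ termwise, and exchanging sums with derivatives.
\item[(b)] Showing that the explicit curve stays in $\Delta$ for \emph{all} $t\in\mathbb R$ with respect to the spherical differentiable structure, i.e.\ that $\sqrt{p_n(t)}$ is a smooth curve in $\U$ and $\dot p_n/\sqrt{p_n}\in\ell^2_\RR$.
\end{enumerate}
Point (b) is exactly what gives geodesic completeness. The tangent condition $\dot p_n(0)/\sqrt{p_n(0)}\in\ell^2$ gives $(a_n\sqrt{p_n(0)})\in\ell^2$, i.e.\ $\sum_n a_n^2p_n(0)<\infty$. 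However, $Z(t)<\infty$ for all $t$ requires the exponential moments $\sum_n p_n(0)e^{a_nt}$ to be finite. I would first try to show that $(a_n)$ is bounded. If this fails in general, completeness must be read with $\nabla^{(e)}$ only on initial data for which these moments are finite. In that case the key step is a dominated-convergence estimate: bound $\sum_n p_n(0)\,|a_n|^j e^{a_nt}$ locally uniformly in $t$, which simultaneously justifies termwise differentiation of every order and shows that $\sqrt{p_n(t)}\in\U$ depends smoothly on $t$. Granting this estimate, the characterization gives a globally defined geodesic for every initial condition, which is the claimed completeness.
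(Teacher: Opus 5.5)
Your formal computation matches the paper's argument. The paper writes $\dot p_n/p_n=a_n-c(t)$, gets $c(t)=\sum_k p_k(t)a_k$ from $\sum_n\dot p_n=0$, and solves; you integrate the logarithmic derivative twice and normalize, which is equivalent. That part is fine, apart from a slip in the sufficiency check: $\dot Z(0)=\sum_k\dot p_k(0)+\lambda=\lambda$, so $u_n(0)=a_n-\lambda=\dot p_n(0)/p_n(0)$.

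The genuine gap is your last sentence, ``Granting this estimate, \dots\ a globally defined geodesic for every initial condition.'' This estimate cannot be granted, and your fallback hope that $(a_n)$ is bounded is false. Membership in \eqref{d:differentiable_structure} gives only the second-moment bound $\sum_n a_n^2p_n(0)<\infty$ (taking $\lambda=0$). That bound says nothing about the exponential moments $Z(t)=\sum_n p_n(0)e^{a_nt}$.

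Here is a concrete counterexample. Take $p_n(0)=Cn^{-4}$ and $\dot p_n(0)=Cn^{-3}$ for $n\ge1$, with $C>0$ small. Choose the $n=0$ entries so that $p_0(0)>0$, $\sum_np_n(0)=1$ and $\sum_n\dot p_n(0)=0$. Then $(\dot p_n(0))\in\ell^1_\RR$ and $\dot p_n(0)/\sqrt{p_n(0)}=\sqrt{C}\,n^{-1}$ is square summable, so this is an admissible tangent vector. But $a_n=n$ for $n\ge1$, and
\[
Z(t)\ \ge\ C\sum_{n\ge1}n^{-4}e^{nt}=+\infty\qquad\text{for every }t>0.
\]
Your necessity argument is purely componentwise, so it applies on any interval $I\ni0$ on which an $e$-geodesic exists. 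It gives $p_n(t)=p_n(0)e^{u_n(0)t}e^{M(t)}$, and then $1=\sum_np_n(t)=e^{M(t)}Z(t)$ forces $Z<\infty$ on $I$. Consequences:
\begin{itemize}
\item This datum admits no $e$-geodesic on any interval $[0,\varepsilon)$.
\item Formula \eqref{eq:explicit_form_e_geodesics} does define a smooth $e$-geodesic on $(-\infty,0)$, since there all moments $\sum_n n^jp_n(0)e^{nt}$ converge. That geodesic cannot be continued past $t=0$.
\item With $\dot p_n(0)=(-1)^nCn^{-3}$ instead, one even gets $Z(t)=+\infty$ for all $t\neq0$.
\end{itemize}
So geodesic completeness, and the clause ``$\forall t\in\RR$'' in \eqref{eq:explicit_form_e_geodesics}, fail for general initial data. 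No sharper estimate can close your step (b).

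What your argument does establish is the following. The characterization holds on the interior of the interval $\{t: Z(t)<\infty\}$. Completeness holds on the flow-invariant set of initial velocities with $(\dot p_n(0)/p_n(0))$ bounded. That set contains the gradient flows of \Cref{prop:integral-curves}, since $(c_n)\in\ell^2_\RR$ is bounded.

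The paper's proof does not address this point either; it simply asserts that solving the ODE gives the formula for all $t\in\RR$. So your caution in (b) was warranted, but the completeness claim cannot be proved as stated.
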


\begin{remark}
The sequence \((a_n)\) is uniquely determined by the initial data only up to addition of a common constant. Indeed, replacing \(a_n\) by \(a_n+\mu\) leaves \eqref{eq:explicit_form_e_geodesics} unchanged, since the factor \(e^{\mu t}\) cancels between numerator and denominator.
\end{remark}

\begin{proof}
By the definition of the \(e\)-connection,
\[
\bigl(\nabla^{(e)}_{\dot p}\dot p\bigr)_n
=
p_n
\left(
\frac{\mathrm d}{\mathrm dt}\left(\frac{\dot p_n}{p_n}\right)
-
\sum_{k=0}^{\infty} p_k\frac{\mathrm d}{\mathrm dt}\left(\frac{\dot p_k}{p_k}\right)
\right)
\qquad \forall n\in \mathbb{N}.
\]
Hence \(\nabla^{(e)}_{\dot p}\dot p=0\) if and only if
\begin{equation}\label{eq:d_dt_dot_p_p}
\frac{\mathrm d}{\mathrm dt}\left(\frac{\dot p_n}{p_n}\right)
=
\sum_{k=0}^{\infty} p_k\frac{\mathrm d}{\mathrm dt}\left(\frac{\dot p_k}{p_k}\right)
\qquad \forall n\in\mathbb{N}.
\end{equation}
Since the right-hand side of \eqref{eq:d_dt_dot_p_p} is independent of \(n\), there exist constants \(a_n\) and a function \(c(t)\) such that
\begin{equation}\label{eq:dot_p_n_p_n}
\frac{\dot p_n(t)}{p_n(t)}=a_n-c(t).
\end{equation}
By the definition of the tangent space of \(\Delta\) in~\eqref{d:differentiable_structure}, summing \eqref{eq:dot_p_n_p_n} over \(n\) yields
\begin{equation}\label{eq:c_t}
c(t)=\sum_{k=0}^{\infty}p_k(t)a_k.
\end{equation}
Substituting \eqref{eq:c_t} into \eqref{eq:dot_p_n_p_n}, we obtain
\begin{equation}\label{eq:dot_pn_p_n_a_n}
\frac{\dot p_n(t)}{p_n(t)}
=
a_n-\sum_{k=0}^{\infty}p_k(t)a_k.
\end{equation}
Evaluating \eqref{eq:dot_pn_p_n_a_n} at \(t=0\), we further obtain
\[
\frac{\dot p_n(0)}{p_n(0)}
=
a_n-\sum_{k=0}^{\infty}p_k(0)a_k.
\]
Thus there exists \(\lambda\in\mathbb{R}\), namely \(\lambda:=\sum_{k=0}^{\infty}p_k(0)a_k\), such that
\[
a_n=\frac{\dot p_n(0)}{p_n(0)}+\lambda
\qquad \forall n\in\mathbb{N}.
\]
Finally, solving \eqref{eq:dot_pn_p_n_a_n} yields
\[
p_n(t)
=
\frac{p_n(0)e^{a_nt}}
{\sum_{k=0}^{\infty}p_k(0)e^{a_kt}} \quad \forall t\in \RR.
\]
The converse implication is proved analogously.
\end{proof}

We close this subsection by establishing the relation between the gradient flow lines in \Cref{prop:integral-curves}, converging to solutions of \eqref{eq:ell2_LPP}, and \(e\)-geodesics.

\begin{corollary}
Let \((c_n)\in \ell^2_\RR\), let \((p_n)_0\in \Delta\), and let \(t\mapsto (p_n)(t)\) be a smooth curve in \(\Delta\) with initial value
\[
(p_n)(0)=(p_n)_0.
\]
Then \(t\mapsto (p_n)(t)\) is the gradient flow line of \(F_{(c_n)}\), defined in \eqref{eq:def_F_cn}, with initial value \((p_n)_0\) if and only if it is an \(e\)-geodesic of \((\Delta,\nabla^{(e)})\) with initial values \(((p_n)_0,(v_n))\) for some \((v_n)\in T_{(p_n)_0}\Delta\) such that there exists \(\lambda\in \RR\) satisfying
\[
c_n = \frac{v_n}{(p_n)_0} + \lambda
\qquad \forall n\in \NN.
\]
\end{corollary}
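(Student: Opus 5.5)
The plan is to combine the explicit formula for gradient flow lines from \Cref{prop:integral-curves} with the characterization of $e$-geodesics in \Cref{prop:e_geodesics_equation}. Both results describe the relevant curves by the same normalized exponential formula, so the corollary should reduce to matching the exponents $c_n$ and $a_n$ up to a common additive constant. Uniqueness of solutions of the initial value problem \eqref{eq:IVP} (Lipschitz vector field $W_{(c_n)}$) lets us identify a curve with its explicit formula.

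For the forward direction, let $t\mapsto(p_n)(t)$ be the gradient flow line with $(p_n)(0)=(p_n)_0$. By \Cref{prop:integral-curves} it equals
\[
p_n(t)=\frac{p_n(0)e^{c_nt}}{\sum_k p_k(0)e^{c_kt}} .
\]
Its initial velocity, read off from \eqref{def:W_c}, is
\[
v_n:=\dot p_n(0)=p_n(0)\bigl(c_n-\langle (p_n)_0,(c_n)\rangle_{\ell^2}\bigr).
\]
I would first check that $(v_n)\in T_{(p_n)_0}\Delta$. The condition $\sum v_n=0$ holds because $\sum p_n(0)=1$. The condition $(v_n/\sqrt{p_n(0)})\in\ell^2$ follows from
\[
\frac{v_n}{\sqrt{p_n(0)}}=\sqrt{p_n(0)}\,\bigl(c_n-\text{const}\bigr),
\]
since $\sqrt{p_n(0)}\in\ell^2$ and $(c_n)$ is bounded. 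Setting $\lambda:=\langle (p_n)_0,(c_n)\rangle_{\ell^2}$ gives $c_n=v_n/p_n(0)+\lambda$. This is exactly the sequence $a_n$ of \Cref{prop:e_geodesics_equation}, so the curve has the form \eqref{eq:explicit_form_e_geodesics} and is therefore an $e$-geodesic with initial data $((p_n)_0,(v_n))$.

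For the converse, suppose the curve is an $e$-geodesic with initial data $((p_n)_0,(v_n))$ and that $c_n=v_n/p_n(0)+\lambda$ for some $\lambda\in\RR$. By \Cref{prop:e_geodesics_equation} there is some $\lambda'$ such that the curve is given by \eqref{eq:explicit_form_e_geodesics} with $a_n=v_n/p_n(0)+\lambda'$. Then $a_n=c_n+(\lambda'-\lambda)$, and by the remark after that theorem a common shift of the exponents cancels between numerator and denominator. Hence the curve coincides with formula \eqref{eq:integral-curves}. By \Cref{prop:integral-curves} and uniqueness for \eqref{eq:IVP}, it is the gradient flow line of $F_{(c_n)}$. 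To be safe I would also verify directly that \eqref{eq:integral-curves} solves $\dot p=W_{(c_n)}(p)$, which is immediate:
\[
\frac{\dot p_n}{p_n}=c_n-\sum_k p_kc_k .
\]

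The main obstacle is a subtle point in the converse: one must know that the constant $\lambda'$ from \Cref{prop:e_geodesics_equation} is compatible with the given $\lambda$. I handle this by the shift invariance, so the specific value of $\lambda$ in the hypothesis is irrelevant; only the relation $v_n/p_n(0)=c_n-\text{const}$ matters. A minor technical point is the time domain: the gradient flow is stated for $t\ge0$ while $e$-geodesics are defined for all $t\in\RR$. I will note that the explicit formula makes sense for all real $t$, since $\sum_k p_k(0)e^{c_kt}$ converges for bounded $(c_n)$, so the identification holds on the common domain.
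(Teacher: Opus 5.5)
Your proposal is correct and follows the paper's route: the paper's proof writes $c_n=v_n/p_n(0)+\lambda$ with $(v_n)\in T_{(p_n)_0}\Delta$ and then combines \Cref{prop:integral-curves} with \Cref{prop:e_geodesics_equation}, exactly as you do. Your explicit choice $v_n=p_n(0)\bigl(c_n-\langle (p_n)_0,(c_n)\rangle_{\ell^2}\bigr)$ for the given $(p_n)_0$, the tangent-space check, and the shift-invariance argument in the converse spell out details the paper leaves to the reader.
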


\begin{remark}
This settles the author's conjecture stated at the end of \cite[§3]{Maier_GSI_2026}.
\end{remark}

\begin{proof}
    First observe that, for each $(c_n)\in \ell^2_\RR$, there exist $(p_n)(0)=(p_n)_0\in \Delta$, $(v_n)\in T_{(p_n)_0}\Delta$, and $\lambda\in \RR$ such that
    \[
    c_n = \frac{v_n}{p_n(0)} + \lambda \qquad \forall n\in \NN.
    \]
    The claim then follows directly from \Cref{prop:integral-curves} and \Cref{prop:e_geodesics_equation}.
\end{proof}

\section{An infinite-dimensional integrable Hamiltonian system}\label{S: 4}
The aim of this section is to study the Hamiltonian nature of the gradient flows in \Cref{s: 3} and the underlying symmetries.

To this end, we denote by \(\ell_{\mathbb{C}}^2\) the space of complex-valued sequences \((z_n)\subseteq \mathbb{C}\) such that
\[
\| (z_n)\|_{\ell^2}^2:=\sum_{n=0}^{\infty} |z_n|^2 < \infty.
\]
Equipped with the standard Hermitian \(\ell^2\)-inner product \(\langle \cdot, \cdot \rangle_{\ell^2}\), this space carries the structure of an infinite-dimensional Kähler manifold, since
\[
\Re \langle \mathrm{i}\cdot, \cdot \rangle_{\ell^2} = \Im \langle \cdot, \cdot \rangle_{\ell^2},
\]
where \(\mathrm{i}=\sqrt{-1}\). The action \(\mathbb{S}^1 \curvearrowright \ell_{\mathbb{C}}^2\), where \(\S^1=\{e^{\i t}: t\in \RR\}\), given by
\[
e^{it} \cdot (z_n) := (e^{it}z_n),
\]
acts by Kähler morphisms and is Hamiltonian, with momentum map
\begin{equation} \label{eq:S1_momentmap_cpinf}
	\mu_{\mathbb{S}^1} : \ell^2_{\mathbb{C}} \longrightarrow  \mathrm{i} \mathbb{R}, \quad (z_n) \mapsto \mathrm{i} \langle (z_n), (z_n) \rangle_{\ell^2}.
\end{equation}
Since \(\mathrm{i}\) is a regular value of \(\mu_{\mathbb{S}^1}\), we obtain by Kähler reduction that the quotient
\[
\mu_{\mathbb{S}^1}^{-1}(\mathrm{i}) / \mathbb{S}^1
\]
is a Kähler manifold. As \(\mu_{\mathbb{S}^1}^{-1}(\mathrm{i})\) is precisely the unit sphere \(\mathbb{S}(\ell_{\mathbb{C}}^2)\) in \(\ell_{\mathbb{C}}^2\), we obtain
\[
\mu_{\mathbb{S}^1}^{-1}(\mathrm{i}) / \mathbb{S}^1 = \mathbb{CP}^{\infty},
\]
where the induced Kähler structure is precisely the Fubini--Study metric \(\mathcal{G}^{\mathrm{FS}}\) together with the Fubini--Study form \(\Omega^{\mathrm{FS}}\). We refer, for example, to \cite[§2]{khesin_geometry_2019} for the explicit form of this Kähler structure. We identify the infinite-dimensional torus
\[
\mathbb{T}^{\infty} := \prod_{n=1}^{\infty} \mathbb{S}^1
\]
with a subgroup of the diagonal unitary operators on \(\ell^2_{\CC}\) by
\[
(e^{\mathrm{i} t_n})\simeq \operatorname{Diag}((e^{\mathrm{i} t_n})).
\]
Thus \(\mathbb{T}^{\infty}\) acts on \(\mathbb{CP}^\infty\) by Kähler morphisms via
\[
\left(e^{\mathrm{i} t_n}\right) \cdot [(z_n)] := [((e^{\mathrm{i} t_n})z_n)].
\]
The corresponding moment map is
\begin{equation}\label{eq:Tinf_momentmap_cpinf}
	\mu_{\mathbb{T}^{\infty}}: \mathbb{CP}^\infty \to (\operatorname{Lie}(\mathbb{T}^{\infty}))^*, \quad [(z_n)] \mapsto \frac{\mathrm{i}}{2} (|z_n|^2).
\end{equation}
Using the identity
\[
(|z_n|^2) = (\bar{z}_n)^T \operatorname{Diag}(1) (z_n)
\]
and noting that \(\operatorname{Diag}(1)\) defines a bounded operator, the moment map \(\mu_{\mathbb{T}^\infty}\) in \eqref{eq:Tinf_momentmap_cpinf} takes values in \((\mathrm{i} \ell^2_{\RR})^*\). By identifying \(\ell^2_{\RR}\) with its dual, we obtain the following commutative diagram:
\begin{equation}
	\begin{tikzcd}
		\mathbb{S}\left(\ell^2_{\mathbb{C}}\right) \arrow[rr, "\Psi"] \arrow[rd, "\pi"] & & \mathrm{i} \ell^2_{\mathbb{R}} \\
		& \mathbb{T}^\infty \curvearrowright \mathbb{CP}^\infty \arrow[ru, "\mu_{\mathbb{T}^\infty}"] &
	\end{tikzcd}
\end{equation}
where \(\Psi(z) := \frac{\mathrm{i}}{2} (|z_n|^2)\) and \(\pi\) denotes the Hopf fibration. Using \(\mathcal{U} \subseteq \mathbb{S}\left(\ell^2_{\mathbb{C}}\right)\) and the explicit form of the inverse of the square root map \(\varPhi\) in \Cref{t: square root map is isometry}, we see that the restriction of \(\Psi\) to \(\mathcal{U}\) is equal to \(\frac{2}{\mathrm{i}} \varPhi^{-1}\). Thus, we obtain the following.

\begin{lemma}\label{l: momentmap image}
	The restriction of \(\Psi\) to \(\mathcal{U}\) is a diffeomorphism onto \(\frac{\mathrm{i}}{2} \Delta\), and the image of the momentum map is given by
	\[
	\mu_{\mathbb{T}^\infty}(\mathbb{CP}^\infty) = \frac{\mathrm{i}}{2} \Delta.
	\]
\end{lemma}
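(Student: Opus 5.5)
The plan is to prove the two assertions of \Cref{l: momentmap image} in order. The first assertion follows formally from the square-root map of \Cref{t: square root map is isometry}. The second comes from $\mathbb{T}^\infty$-invariance together with the factorisation $\Psi=\mu_{\mathbb{T}^\infty}\circ\pi$ given by the commutative diagram.

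\textbf{Step 1: $\Psi|_{\mathcal U}$ is a diffeomorphism onto $\tfrac{\mathrm i}{2}\Delta$.} For $(x_n)\in\mathcal U$ we have $x_n>0$ and $\sum x_n^2=1$. Hence
\[
\Psi((x_n))=\tfrac{\mathrm i}{2}(x_n^2)=\tfrac{\mathrm i}{2}\,\varPhi^{-1}((x_n)),
\]
and $(x_n^2)\in\Delta$. Here I would use the normalisation $\tfrac{\mathrm i}{2}\varPhi^{-1}$, which is what the formula for $\Psi$ gives.

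\begin{itemize}
\item By construction of the differentiable structure \eqref{d:differentiable_structure}, $\varPhi:\Delta\to\mathcal U$ is a diffeomorphism, so $\varPhi^{-1}:\mathcal U\to\Delta$ is one as well.
\item Multiplication by the constant $\tfrac{\mathrm i}{2}$ is a linear isomorphism onto $\tfrac{\mathrm i}{2}\Delta$, which we give the transported structure.
\end{itemize}

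Composing the two, $\Psi|_{\mathcal U}$ is a diffeomorphism onto $\tfrac{\mathrm i}{2}\Delta$. In particular $\tfrac{\mathrm i}{2}\Delta\subseteq \mu_{\mathbb{T}^\infty}(\mathbb{CP}^\infty)$, since $\Psi=\mu_{\mathbb{T}^\infty}\circ\pi$.

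\textbf{Step 2: reduction to representatives with nonnegative entries.} Given $[(z_n)]\in\mathbb{CP}^\infty$ with $(z_n)\in\mathbb S(\ell^2_{\CC})$, write $z_n=|z_n|e^{\mathrm i\theta_n}$. Acting by $(e^{-\mathrm i\theta_n})\in\mathbb{T}^\infty$ does not change $\Psi$, because $\Psi$ only sees $|z_n|^2$. So every value of $\mu_{\mathbb{T}^\infty}$ is attained at a real representative $(|z_n|)\in\mathbb S(\ell^2_{\RR})$ with nonnegative entries.

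Its image $\tfrac{\mathrm i}{2}(|z_n|^2)$ always satisfies $\sum|z_n|^2=1$ and $|z_n|^2\ge0$. Thus the moment image is cut out exactly by the simplex constraints. When all $z_n\neq 0$, the representative lies in $\mathcal U$, and Step 1 identifies its value as a point of $\tfrac{\mathrm i}{2}\Delta$. This gives the reverse inclusion on the set of such points.

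\textbf{Main obstacle: vanishing coordinates.} If some $z_n=0$, the representative lies in $\bar{\mathcal U}\setminus\mathcal U$ and its value has a zero entry, so it is not in $\Delta$ with strict positivity. To match the statement as worded, I would take the domain of $\mu_{\mathbb{T}^\infty}$ in the lemma to be the open dense $\mathbb{T}^\infty$-saturated set $\pi(\mathbb{T}^\infty\cdot\mathcal U)$ of $\mathbb{CP}^\infty$ on which all homogeneous coordinates are nonzero. This is the regular part of the torus action, and the one relevant for the flows of \Cref{s: 3}.

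On this set, Steps 1 and 2 give equality with $\tfrac{\mathrm i}{2}\Delta$. I would then remark that, by continuity of $\Psi$ and density of $\mathcal U$ in $\bar{\mathcal U}$, the remaining points only contribute the boundary $\tfrac{\mathrm i}{2}(\bar\Delta\setminus\Delta)$. Making this convention explicit is the one delicate point; everything else is the formal computation of Step 1.
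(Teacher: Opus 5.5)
Your Step 1 is the paper's argument. The paper's only justification is the remark just before the lemma that $\Psi|_{\mathcal U}$ is a constant multiple of $\varPhi^{-1}$, and both assertions are read off from that. There the constant is written as $\frac{2}{\mathrm i}$, which is a slip; your $\frac{\mathrm i}{2}\varPhi^{-1}$ is what the formula for $\Psi$ gives.

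The paper never considers points with vanishing homogeneous coordinates, and your Steps 2--3 show why it should have. That remark yields only $\frac{\mathrm i}{2}\Delta\subseteq\mu_{\mathbb{T}^\infty}(\mathbb{CP}^\infty)$. Your Step 2 actually proves $\mu_{\mathbb{T}^\infty}(\mathbb{CP}^\infty)=\frac{\mathrm i}{2}\bar\Delta$, where $\bar\Delta=\{(p_n)\in\ell^1_{\RR}: p_n\ge 0,\ \sum_n p_n=1\}$; every such point is attained at $z_n=\sqrt{p_n}$. So the second assertion is false as literally stated. Replacing $\Delta$ by $\bar\Delta$ in the conclusion is a cleaner repair than changing the domain, and it is exactly what Step 2 proves.

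Two corrections to your repair.

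\emph{The restricted domain is not open.} The set $\pi(\mathbb{T}^\infty\cdot\mathcal U)=\{[(z_n)] : z_n\neq 0\ \forall n\}$ is dense, but it has empty interior. Given such a point and $\varepsilon>0$, pick $N$ with $|z_N|<\varepsilon$, set the $N$-th coordinate to $0$ and renormalize. This moves the point by at most $|z_N|+|z_N|^2$ and leaves the set. So drop ``open'' and ``regular part of the torus action''; the set-theoretic identity you prove on this domain is unaffected. Flipping the sign of a small coordinate shows in the same way that $\mathcal U$ is not open in $\mathbb{S}(\ell^2_{\RR})$, despite the paper's assertion. Keep this in mind when you invoke the differentiable structure in Step 1.

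\emph{The continuity--density remark is not needed.} In the $\ell^2$-topology the paper puts on the target, continuity only places the boundary values in the $\ell^2$-closure of $\frac{\mathrm i}{2}\Delta$. That closure also contains points with $\sum_n p_n<1$, for example $0$, so it gives less than you claim. The direct computation in Step 2 already shows that points with some $z_n=0$ map exactly onto $\frac{\mathrm i}{2}(\bar\Delta\setminus\Delta)$.
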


\begin{remark}
The map \( \mu_{\mathbb{T}^{\infty}} \) can be interpreted as an \(\ell^2\)-version of the inverse of the Madelung transform in the density component, as described in \cite[Prop.~4.3]{khesin_geometry_2019}.
\end{remark}

For a choice of \( (c_n) \in \ell^2_{\RR} \), we define the Hamiltonian
\begin{equation}\label{eq defi Hamiltonian on CP}
    H_{(c_n)} : \mathbb{C}P^\infty \longrightarrow \mathbb{R}, \quad [z_n] \mapsto \left\langle (c_n), 2\bar{\i} \cdot \mu_{\mathbb{T}^\infty}([z_n]) \right\rangle_{\ell^2} = \left\langle (c_n), (|z_n|^2) \right\rangle_{\ell^2}.
\end{equation}
Since \( \Omega^{\mathrm{FS}} \) is a strong symplectic form, the Hamiltonian vector field \( X_{H_{(c_n)}} \) is uniquely determined by
\[
\Omega^{\mathrm{FS}}(X_{H_{(c_n)}}, \cdot) = \mathrm{d}H_{(c_n)}.
\]
Moreover, since \( (\mathbb{C}P^\infty, \mathcal{G}^{\mathrm{FS}}, \Omega^{\mathrm{FS}}) \) is a Kähler manifold, the gradient and Hamiltonian vector fields of \( H_{(c_n)} \) are related by multiplication with \( \mathrm{i} \), that is,
\[
X_{H_{(c_n)}}=\i \nabla H_{(c_n)}.
\]
The flow \( \varphi_{H_{(c_n)}} \) of the vector field \( X_{H_{(c_n)}} \) is called the \emph{Hamiltonian flow} of \( H_{(c_n)} \). This flow preserves the level sets
\[
\Sigma_{\kappa}:= H_{(c_n)}^{-1}(\kappa)
\]
for all energy levels \( \kappa \in \mathbb{R} \).

The following theorem shows that this system admits infinitely many Poisson-commuting first integrals and that the gradient flow interpretation from \Cref{s: 3} is compatible with the Hamiltonian picture on \(\mathbb{CP}^\infty\).

\begin{theorem}\label{t: integrability of Hamiltonian system}
The Hamiltonian system \( (\mathbb{CP}^\infty, \Omega^{\mathrm{FS}}, H_{(c_n)}) \) admits infinitely many linearly independent Poisson-commuting first integrals.

More precisely, for each \(n\in \mathbb{N}\), the Hamiltonian
\begin{equation}\label{eq: Hamiltonians on CP inf for integrability}
	H_n : \mathbb{C}P^\infty \longrightarrow \mathbb{R}, \quad [z_m] \mapsto c_n|z_n|^2
\end{equation}
is a first integral of \(H_{(c_n)}\), and for all \(k\neq n\) one has
\[
\{H_k,H_n\}_{\mathbb{C}P^\infty}=0
\qquad \text{and} \qquad
\{H_{(c_n)},H_n\}_{\mathbb{C}P^\infty}=0.
\]

Moreover, let \( (x_n)(t) \) be a gradient flow line of the restriction of the gradient flow of \( H_{(c_n)} \) on \( (\mathbb{CP}^\infty, \mathcal{G}^{\mathrm{FS}}) \) to \( \mathcal{U} \). If, in addition, \((c_n)\) is strictly monotonically decreasing, then the limit
\[
p_{\max} := \lim_{t \to \infty} \varPhi\left((x_n)(t)\right)
\]
exists and solves \eqref{eq:ell2_LPP}.
\end{theorem}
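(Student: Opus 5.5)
The plan has three parts: verify the Poisson-commutation relations on the sphere before passing to $\mathbb{CP}^\infty$, then identify the gradient flow with the flow of \Cref{prop:integral-curves}, then take the limit.

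\textbf{Commuting first integrals.} I will work on $\mathbb{S}(\ell^2_{\CC})$ and use the Hopf fibration $\pi$. Functions on $\mathbb{CP}^\infty$ correspond to $\mathbb{S}^1$-invariant functions on the sphere, and each $|z_n|^2$ is $\mathbb{S}^1$-invariant. By Kähler reduction, Poisson brackets of such functions on $\mathbb{CP}^\infty$ can be computed from the canonical symplectic form $\Im\langle\cdot,\cdot\rangle_{\ell^2}$ on $\ell^2_{\CC}$.

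Write $z_m = x_m + \i y_m$. Then $|z_n|^2 = x_n^2 + y_n^2$ depends only on the $n$-th coordinate pair. Hence $\{|z_k|^2,|z_n|^2\}$ involves only derivatives in disjoint pairs of canonical coordinates when $k \neq n$, and so vanishes. Equivalently, $|z_n|^2$ generates the rotation $z_n \mapsto e^{\i t} z_n$ in the $n$-th coordinate, and this leaves $|z_k|^2$ invariant.

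Because $H_{(c_n)} = \sum_k c_k |z_k|^2$ with $(c_k) \in \ell^2_\RR$ and $(|z_k|^2) \in \ell^1$, the series converges absolutely together with its derivatives. So $\{H_{(c_n)},H_n\} = \sum_k c_k c_n \{|z_k|^2,|z_n|^2\} = 0$, where the self-term vanishes by antisymmetry.

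Linear independence of the $H_n$ (for $c_n \neq 0$) follows from the linear independence of the differentials $\d|z_n|^2$ at a generic point of $\mathcal{U}$. The technical point to justify is exchanging the bracket with the infinite sum. The derivative of $\sum_k c_k|z_k|^2$ is the bounded functional $v \mapsto 2\Re\langle (c_k z_k), v\rangle$, so this is legitimate. I expect this exchange, and the passage from ambient brackets to reduced brackets, to be the main care point. Both are handled by the $\mathbb{S}^1$-invariance of all the functions involved.

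\textbf{Gradient flow.} Restricted to $\mathcal{U} \subset \mathbb{S}(\ell^2_\RR) \subset \mathbb{S}(\ell^2_{\CC})$, the lift of $H_{(c_n)}$ is exactly the function of \eqref{eq: optimization problem sphere}. The real sphere is horizontal for the Hopf fibration (its tangent vectors are orthogonal to $\i x$), and the Fubini--Study gradient lifts to the horizontal projection of the round-sphere gradient. Hence the gradient flow restricted to $\mathcal{U}$ is the flow of \eqref{eq: gradient on sphere}.

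It remains to check that this flow stays in $\mathcal{U}$, i.e. stays real and positive. Reality holds because the vector field is real. Positivity holds because the explicit solution is positive.

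\textbf{Limit.} By \Cref{t: square root map is isometry}, $\varPhi^{-1}$ maps gradient lines on $\mathcal{U}$ to gradient lines of $F_{(c_n)}$ on $(\Delta,\Gfr)$; note that $p_n = x_n^2$, so $\varPhi^{-1}$ is what converts $(x_n)(t)$ into the probability curve. The flow $(p_n)(t)$ is then given by \eqref{eq:integral-curves}.

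If $(c_n)$ is strictly decreasing, then \Cref{prop:integral-curves} shows that $(p_n)(t)$ converges to $(1,0,\dots)$, which solves \eqref{eq:ell2_LPP}. The quantity denoted $\varPhi((x_n)(t))$ in the statement should be read as this image $\varPhi^{-1}((x_n)(t)) = (p_n)(t)$ in $\Delta$.
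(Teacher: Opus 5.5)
Your proposal is correct and follows essentially the same route as the paper. The brackets are computed for the $\mathbb{S}^1$-invariant lifts $c_n|z_n|^2$ on $\ell^2_{\mathbb{C}}$, where disjoint coordinate support makes them vanish, and are then descended to $\mathbb{CP}^\infty$ by reduction; the limit statement comes from identifying $\mathcal{U}$ with its image in $\mathbb{CP}^\infty$, transporting through the square-root isometry, and invoking \Cref{prop:integral-curves}. Your additional points are all sound and make explicit what the paper's proof leaves implicit: real tangent vectors are horizontal, so the Fubini--Study gradient flow through points of $\mathcal{U}$ is the round-sphere flow \eqref{eq: gradient on sphere} and stays in $\mathcal{U}$; linear independence requires $c_n\neq 0$; and the map in the limit must be read as $\varPhi^{-1}$.
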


\begin{proof}
For each \( n \in \mathbb{N} \), define the sequence \( (b_m) \) by
\[
b_m:=\begin{cases}
    c_n\, &, \quad \text{for } m=n,\\
    0\, &, \quad \text{otherwise.}
\end{cases}
\]
Then the Hamiltonian \(H_n\) in \eqref{eq: Hamiltonians on CP inf for integrability} may be written as
\[
H_n([z_m]) = \sum_{m=0}^{\infty}b_m |z_m|^2 = c_n|z_n|^2.
\]
Its Hamiltonian flow \( \varphi_{H_n} \) fixes all coordinates except the \( n \)-th, where it acts by phase rotation. In particular, the Hamiltonians \(H_n\) are linearly independent and arise from the coordinate circle actions inside \(\mathbb{T}^\infty\).

We now verify the Poisson-commutation relations. It suffices to prove that the lifts \(\hat{H}_k\) and \(\hat{H}_n\) to \(\mathbb{S}(\ell^2_{\mathbb{C}})\), which naturally extend to \(\ell^2_{\mathbb{C}}\), commute and are \(\mathbb{S}^1\)-invariant.

Recall that the canonical symplectic form on \(\ell^2_{\mathbb{C}}\) is
\[
\omega_{\mathrm{can}} = \frac{\i}{2} \sum_{j=0}^{\infty} \d z_j \wedge \d \bar{z}_j,
\]
inducing the Poisson bracket
\[
\{f, g\} = 2\,\i \sum_{j=0}^{\infty} \left( \frac{\partial f}{\partial \bar{z}_j} \frac{\partial g}{\partial z_j} - \frac{\partial f}{\partial z_j} \frac{\partial g}{\partial \bar{z}_j} \right).
\]
By definition,
\[
\hat{H}_k(z) = c_k |z_k|^2 = c_k z_k \bar{z}_k,
\]
with partial derivatives
\begin{equation*}
    \frac{\partial \hat{H}_k}{\partial z_j} = c_k \bar{z}_k \delta_{jk}, \qquad
    \frac{\partial \hat{H}_k}{\partial \bar{z}_j} = c_k z_k \delta_{jk}, \quad \forall j \in \mathbb{N},
\end{equation*}
and analogously for \(\hat{H}_m\). Since \(k\neq m\), their derivatives have disjoint support, and therefore
\[
\{ \hat{H}_k, \hat{H}_m \} = 0.
\]
Using \(\mathbb{S}^1\)-invariance and the moment map description~\eqref{eq:S1_momentmap_cpinf}, it follows that
\[
\{ H_k, H_m \}_{\mathbb{C}P^{\infty}} = 0.
\]
Repeating the same argument for \(H_{(c_n)}\), we obtain
\[
\{ H_{(c_n)}, H_n \}_{\mathbb{C}P^{\infty}} = 0
\qquad \forall n\in \mathbb{N}.
\]

Finally, identify \(\mathcal{U}\) with its image in \(\mathbb{CP}^\infty\) and use \Cref{t: square root map is isometry}. Together with the identity
\[
H_{(c_n)} \circ \varPhi = F_{(c_n)} \quad \text{on } \Delta,
\]
where \( F_{(c_n)} \) is defined in \eqref{eq:def_F_cn}, the final claim follows from \Cref{prop:integral-curves}.
\end{proof}
\section{Information geometry of \(\mathrm{Dens}(M)\) on non-compact smooth manifolds}\label{S:5}

In this final section, we outline an approach to the information geometry of \(\mathrm{Dens}(M)\) on non-compact smooth manifolds \(M\), inspired by the findings in \Cref{s: 2}. The point of departure is that the square root map remains the natural object in this setting as well, and this suggests a framework that may serve as a basis for further investigation.

We begin by introducing the setting. Let \((M, g)\) be a Riemannian manifold, possibly non-compact. Denote by \(\mathrm{d}\mu\) the volume form induced by the metric \(g\), and define the space of smooth probability densities on \(M\) by
\begin{equation}\label{e def Dens}
    \mathrm{Dens}(M) := \left\{ \rho \in C^{\infty}(M) : \int_M \rho\, \mathrm{d}\mu = 1 \quad \text{and} \quad \rho > 0 \right\}.
\end{equation}
We denote the space of square-integrable functions on \(M\) with respect to \(\mathrm{d}\mu\) by \(L^2(M, \mu)\), and its inner product by \(\langle \cdot, \cdot \rangle_{L^2}\), which induces the norm \(\Vert \cdot \Vert_{L^2}\). We define
\begin{equation}\label{e: Sphere}
    \S_{L^2}^{\infty}(M):= \S^{L^2}(C^{\infty}(M,\RR)) := \left\{ f \in C^{\infty}(M,\RR) : \Vert f \Vert_{L^2} = 1 \right\}
\end{equation}
as the unit sphere of smooth functions in \(L^2(M,\mu)\).

Before proceeding, note that the two manifolds defined in \eqref{e def Dens} and \eqref{e: Sphere}, viewed as hypersurfaces of \(C^{\infty}(M)\), naturally inherit their topology from the ambient space. Furthermore, we equip \(\S_{L^2}^{\infty}(M)\) with the standard differentiable structure, i.e., the differentiable structure it inherits as a hypersurface of \(C^{\infty}(M)\). Thus its tangent space at the point \(f\) is given by
\[
T_f \S_{L^2}^{\infty}(M)= \left\{g\in C^{\infty}(M,\RR): \langle f,g \rangle_{L^2}=0\right\}. 
\]
The round metric, or equivalently the \(L^2\)-metric \(\mathcal{G}^{L^2}\), is obtained by restricting the inner product \(\langle \cdot, \cdot \rangle_{L^2}\) to the tangent space of \(\S_{L^2}^{\infty}(M)\). Thus, \((\S_{L^2}^{\infty}(M), \mathcal{G}^{L^2})\) is a weak Riemannian tame Fréchet manifold.

Next, we define the open subset of \(\S_{L^2}^{\infty}(M)\) consisting of everywhere positive functions by
\begin{equation}
    \mathbb{U}_{L^2}^{\infty}:=\{f\in \S_{L^2}^{\infty}(M): f>0\}.
\end{equation}
This allows us to recall that the square root map defines the following homeomorphism:
\begin{equation}\label{e: def square root map}
    \varPhi:  \mathrm{Dens}(M) \longrightarrow  \mathbb{U}_{L^2}^{\infty}, \quad \rho \mapsto \sqrt{\rho}.
\end{equation}
We now choose the smooth structure on \(\mathrm{Dens}(M)\) so that \eqref{e: def square root map} becomes a diffeomorphism; we call this the \emph{spherical differentiable structure} on \(\mathrm{Dens}(M)\). With this differentiable structure, the tangent space at a point \(\rho\in \mathrm{Dens}(M)\) is given by
\begin{equation}\label{e: diff strucutre on Dens}
    T_{\rho}\mathrm{Dens}(M) = \left\{ \theta \in C^{\infty}(M,\RR) : \int_M \theta\, \mathrm{d}\mu = 0 \quad \text{and} \quad \frac{\theta}{\sqrt{\rho}} \in L^2(M, \mu) \right\},
\end{equation}
which shows that this differentiable structure differs substantially from the one induced by the ambient space.  This is, in some sense, related to \cite[§2]{Friedrich1991}.

\begin{remark}
If \(M\) is compact, the condition on \(\frac{\theta}{\sqrt{\rho}}\) in \eqref{e: diff strucutre on Dens} is automatically satisfied. Therefore, in this case, the spherical differentiable structure coincides with the differentiable structure induced by the ambient space.
\end{remark}

We define the Fisher--Rao metric at the point \(\rho \in \mathrm{Dens}(M)\) in this setting by
\begin{equation}\label{e: def Fisher rao}
    \mathcal{G}^{FR}_{\rho} : T_{\rho}\mathrm{Dens}(M) \times T_{\rho}\mathrm{Dens}(M) \longrightarrow \mathbb{R}, \quad (\theta, \bar{\theta}) \mapsto \mathcal{G}^{FR}_{\rho}(\theta, \bar{\theta}) := \int_{M} \frac{\theta \cdot \bar{\theta}}{\rho}\, \mathrm{d}\mu,
\end{equation}
which is well defined with respect to the differentiable structure chosen in \eqref{e: diff strucutre on Dens}.

\begin{remark}
Analogously to \Cref{r: the right diff strcuture}, without this condition, the Fisher--Rao metric \eqref{e: def Fisher rao} is not well defined. Even under decay assumptions analogous to those in \cite[§3.5]{MichorMumfordZoo}, one can construct rapidly decaying functions in \(\mathrm{Dens}(\RR)\) such that the Fisher--Rao metric fails to be finite on some tangent vectors.
\end{remark}

Next, we show that in this framework the square root map is again an isometry.

\begin{theorem}\label{thM:square_roor_isom_for_non_closed_mnfds}
If \(\mathrm{Dens}(M)\) is equipped with the spherical differentiable structure given in \eqref{e: diff strucutre on Dens}, then the square root map
\[
    \varPhi:  \left(\mathrm{Dens}(M), \mathcal{G}^{FR}\right) \longrightarrow \left(\mathbb{U}_{L^2}^{\infty}, \mathcal{G}^{L^2}\right), \quad \rho \mapsto \sqrt{\rho}
\]
is an isometry.
\end{theorem}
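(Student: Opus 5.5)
The plan is to argue exactly as for \Cref{t: square root map is isometry}: since the spherical differentiable structure on \(\mathrm{Dens}(M)\) is \emph{defined} by declaring \(\varPhi\) in \eqref{e: def square root map} to be a diffeomorphism, the only content of the statement is that the pullback \(\varPhi^*\mathcal{G}^{L^2}\) agrees with \(\mathcal{G}^{FR}\) on each \(T_\rho\mathrm{Dens}(M)\). So I would first compute the differential of \(\varPhi\), then check that it identifies the tangent spaces, and finally compare the two bilinear forms pointwise.

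\textbf{Step 1 (the differential).} For \(\rho\in\mathrm{Dens}(M)\) and \(\theta\in T_\rho\mathrm{Dens}(M)\), take the curve \(\rho_t=\rho+t\theta\), which stays positive pointwise for small \(|t|\) on any compact set. Differentiating \(\sqrt{\rho_t}\) pointwise at \(t=0\) gives the candidate
\[
d\varPhi_\rho(\theta)=\frac{\theta}{2\sqrt{\rho}} .
\]
Because the differentiable structure is transported from \(\mathbb U^\infty_{L^2}\), this formula is in fact the definition of the identification \(T_\rho\mathrm{Dens}(M)\cong T_{\sqrt\rho}\mathbb U^\infty_{L^2}\). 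The real task is to check that this is consistent with the description \eqref{e: diff strucutre on Dens}, and the next step does that.

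\textbf{Step 2 (tangent spaces correspond).} I would show that \(\theta\mapsto \theta/(2\sqrt\rho)\) is a linear bijection from the space in \eqref{e: diff strucutre on Dens} onto \(T_{\sqrt\rho}\mathbb U^\infty_{L^2}=\{g\in C^\infty: \langle \sqrt\rho, g\rangle_{L^2}=0\}\).
\begin{itemize}
\item Smoothness is preserved in both directions because \(\rho>0\) is smooth.
\item The condition \(\theta/\sqrt\rho\in L^2\) is exactly what makes \(g=\theta/(2\sqrt{\rho})\) lie in \(L^2\).
\item The orthogonality condition is \(\langle\sqrt\rho,g\rangle_{L^2}=\tfrac12\int_M\theta\,\d\mu\), so it is equivalent to \(\int_M\theta\,\d\mu=0\). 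Here \(\sqrt\rho\in L^2\) and \(g\in L^2\) guarantee that \(\theta=2\sqrt\rho\, g\in L^1\), so the integral is meaningful.
\item The inverse map is \(g\mapsto 2\sqrt\rho\, g\).
\end{itemize}

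\textbf{Step 3 (comparison of the metrics).} For \(\theta,\bar\theta\in T_\rho\mathrm{Dens}(M)\),
\[
\mathcal{G}^{L^2}_{\sqrt\rho}\bigl(d\varPhi_\rho\theta,d\varPhi_\rho\bar\theta\bigr)=\int_M\frac{\theta\,\bar\theta}{4\rho}\,\d\mu ,
\]
and this integral converges absolutely by Cauchy--Schwarz using Step 2. This is \(\tfrac14\mathcal{G}^{FR}_\rho(\theta,\bar\theta)\).

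\textbf{Main obstacle: the normalization.} The definition \eqref{e: def Fisher rao} omits the factor \(\tfrac14\) that appears in \eqref{e:defi_l2_Fisher_Rao}. I would therefore read the statement with the convention of \Cref{t: square root map is isometry}, i.e. with \(\tfrac14\) in front of \eqref{e: def Fisher rao}, under which \(\varPhi\) is an isometry. Equivalently, with \eqref{e: def Fisher rao} taken literally, \(\rho\mapsto 2\sqrt\rho\) is an isometry onto the sphere of radius \(2\).

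The genuine analytic subtlety is elsewhere. For non-compact \(M\), the curve \(\rho+t\theta\) need not remain positive, since \(\theta/\rho\) may be unbounded. I would avoid this entirely by never differentiating inside \(\mathrm{Dens}(M)\). Instead, I would use curves \(f_t\) in \(\mathbb U^\infty_{L^2}\), for instance the great circle
\[
f_t=\cos(t\|g\|)\sqrt\rho+\sin(t\|g\|)\,g/\|g\|,
\]
which stays positive on compacta for small \(t\), and set \(\rho_t=f_t^2\). By construction of the structure, \(\dot\rho_0=2\sqrt\rho\, g\), which gives the formula of Step 1 rigorously.
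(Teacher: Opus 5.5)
Your Steps 1--3 are the argument the paper intends. Its proof just says to transport the structure through \(\varPhi\) and check that the square root map carries \(\mathcal{G}^{FR}\) to \(\mathcal{G}^{L^2}\), as in Khesin et al. Your check that \(\theta\mapsto\theta/(2\sqrt\rho)\) maps \eqref{e: diff strucutre on Dens} bijectively onto \(T_{\sqrt\rho}\mathbb{U}^\infty_{L^2}\) supplies details the paper omits, including \(\theta\in L^1\) via Cauchy--Schwarz. Your normalization remark is also correct, and the paper does not address it. With \eqref{e: def Fisher rao} as written one gets \(\varPhi^*\mathcal{G}^{L^2}=\tfrac14\,\mathcal{G}^{FR}\). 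So the statement holds only with the factor \(\tfrac14\) of \eqref{e:defi_l2_Fisher_Rao} restored, or for \(\rho\mapsto 2\sqrt\rho\) onto the sphere of radius \(2\).

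The closing paragraph, however, does not do what you claim and should be dropped. For small \(t\), the condition \(f_t>0\) on all of \(M\) is equivalent to \(1+\tan(t\|g\|)\,\theta/(2\|g\|\rho)>0\) everywhere. That involves exactly the ratio \(\theta/\rho\) that made you reject \(\rho+t\theta\). Positivity on compacta was already true for the linear curve, and it is not membership in \(\mathbb{U}^\infty_{L^2}\).

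Concretely, take \(M=\mathbb{R}\), \(\rho\) the standard Gaussian and \(\theta=\rho^{3/4}\sin x\). This \(\theta\) lies in \(T_\rho\mathrm{Dens}(M)\): it is odd, and \(\theta/\sqrt\rho=\rho^{1/4}\sin x\in L^2\). But \(\theta/\rho=\rho^{-1/4}\sin x\) is unbounded above and below. Hence \(f_t\) takes negative values for every sufficiently small \(t\neq 0\). Then \(\rho_t=f_t^2\) vanishes somewhere, so it is not in \(\mathrm{Dens}(M)\), and in any case \(\sqrt{\rho_t}=|f_t|\neq f_t\).

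The identification \(d\varPhi_\rho(\theta)=\theta/(2\sqrt\rho)\) should simply be taken as part of the transported structure. That is precisely what \eqref{e: diff strucutre on Dens} encodes, and it is all the paper uses. The same example shows that the great circle through \(\sqrt\rho\) leaves the positive functions at once. So for non-compact \(M\), \(\mathbb{U}^\infty_{L^2}\) is not open in \(\mathbb{S}^\infty_{L^2}(M)\) in any topology making that curve continuous. This is a defect of the setup, which the paper takes for granted, and no choice of curve can repair it.
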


\begin{remark}
\Cref{thM:square_roor_isom_for_non_closed_mnfds} extends the result~\cite[Thm.~3.1]{khesin_GAFA} from compact to non-compact manifolds.
\end{remark}

\begin{proof}
The proof follows the lines of \cite[Thm.~3.1]{khesin_GAFA}, using the spherical differentiable structure introduced above and the fact that the square root map identifies the Fisher--Rao metric with the \(L^2\)-metric on \(\mathbb{U}_{L^2}^{\infty}\).
\end{proof}

The perspective developed here suggests that the square root map may provide a useful framework for studying information geometry on non-compact domains more systematically. In particular, it is plausible that constructions which rely heavily on the fact that the square root map is an isometry can be extended from the \(\ell^2\)-setting to \(\mathrm{Dens}(M)\) on non-compact manifolds. It would therefore be interesting to investigate whether results such as \Cref{prop:integral-curves}, \Cref{prop:e_geodesics_equation}, and \Cref{t: integrability of Hamiltonian system} admit analogues in this broader setting.\\

\noindent
\textbf{Acknowledgements.:}
The author thanks P. Albers, M. Bleher, J. Cassel, Y. Elshiaty, and F. Schlindwein for valuable discussions.\\
The author acknowledges funding by the Deutsche Forschungsgemeinschaft (DFG, German Research Foundation) – 281869850 (RTG 2229), 390900948 (EXC-2181/1) and 281071066 (TRR 191).\\
The author would like to acknowledge the excellent working
conditions and interactions at Erwin Schrödinger International
Institute for Mathematics and Physics, Vienna, during the thematic
programme \emph{``Infinite-dimensional Geometry: Theory and Applications"}
where part of this work was completed.\\
The author gratefully acknowledges the anonymous referees for their insightful comments and suggestions, which have improved the quality of the paper.
\\

\noindent\textbf{Data availability} In this research no data is processed.\\
\noindent\\ 
\textbf{Declarations}\\\\
\textbf{Conflict of interest:} The author states that there is no conflict of interest.
%\appendix 
%\section{Proof of \Cref{prop:integral-curves}}\label{Appendix proof Prop}

%\section{Proof of \Cref{t: integrability of Hamiltonian system}}\label{Appendix proof of integrability of Hamiltonian system}

\bibliographystyle{abbrv}
	\bibliography{literature}
\end{document}